\documentclass[11pt]{article}
\usepackage{mydef2col}
\usepackage{markArticle}

\title{Flux-Corrected Diagonal Frog: second order and positivity at all time steps}

\shorttitle{Flux-Corrected Diagonal Frog}

\author{
    \authorstyle{ Andrey Itkin}
    \newline \newline
    \institution{FREE department, Tandon School of Engineering, New York University, email: \url{aitkin@nyu.edu}} \\
}

\date{\today}

\begin{document}

\maketitle

\lettrineabstract{By Godunov's theorem, linear second-order finite-difference schemes for the Fokker-Planck equation cannot preserve positivity. The Diagonal Frog (DF) framework previously bypassed this barrier using eventual positivity, but required a strict minimum time step. This paper resolves the small-step limitation using a nonlinear extension of the DF solvers. We split the second-order directional operator into a monotone M-matrix core and an antidiffusive flux correction. A Zalesak-type limiter is then applied iteratively within the implicit banded solve. The resulting Flux-Corrected DF (FCDF) schemes (variants A and B) are unconditionally positive across all time steps. Because the limiter acts on fluxes rather than point values, these schemes conserve discrete mass exactly and maintain second-order accuracy. Crucially, the limiter activates only within unresolved layers. This ensures the global $L_1$ convergence remains second-order uniformly in the cell P\'eclet number, avoiding the first-order degradation seen in the Chang-Cooper scheme. The method's Picard iteration is contractive under a purely convective step restriction. To support arbitrary step sizes, we develop an active-set reformulation. This solves the system using a semismooth Newton iteration, where computational cost scales only with the number of nodes where positivity binds. Finally, we introduce a defect-corrected time stepping approach that restores second-order time accuracy. Numerical experiments on Ornstein-Uhlenbeck and advection-dominated benchmarks confirm our claims.
}

\section{Introduction}

A linear finite-difference (FD) scheme that preserves nonnegativity of its solution is monotone, and by Godunov's theorem \cite{Godunov1959} a linear monotone scheme is at most first-order accurate. For the Fokker--Planck equation (FPE) this barrier is not a technicality: the solution is a probability density, nonnegativity is a physical requirement rather than a cosmetic one, and second-order accuracy in both space and time is what makes the computation affordable. In financial applications the same requirement appears as the absence of butterfly arbitrage: the Dupire forward equation for the option-implied transition density is an FPE, and calibration of local volatility surfaces rests on that density remaining nonnegative (see e.g., \cite{ItkinLocalVol} and references therein).

The two companion papers of this series confront the barrier from the large-step side. In \cite{ItkinDF2026} the directional operators of the Diagonal Frog (DF) discretization are eventually nonnegative matrices (EM), and the matrix exponential $e^{\Delta t L}$ is entrywise nonnegative for all steps above an explicit threshold $\tau_0$. In \cite{ItkinKazbek2026ADI} the exponential is replaced by resolvents, and the backward Euler map $(\mathcal I-\gamma L)^{-1}$ is proved nonnegative on the mirrored one-sided window $\gamma\ge\gamma_0$. In both cases positivity is a large-step phenomenon and provably fails as $\Delta t\to0$ at fixed mesh. This is Godunov's barrier seen from inside the linear theory: below the threshold, no repair of a linear second-order scheme is possible even in principle.

The only remaining escape is nonlinearity, and the DF framework already contains the germ of one. The factorized mixed-derivative solver of \cite{Itkin3D,ItkinDF2026} monitors the sign of the running right-hand side at runtime and terminates its Picard iteration at the last nonnegative iterate. Stopping at a data-dependent moment is a nonlinear rule; it is what makes the solver positive in practice without contradicting the Godunov's theorem. The present paper makes that nonlinearity explicit, local and quantitative, and turns it into a scheme with proved properties rather than a monitoring heuristic.

The problem addressed in this work resides at the intersection of two computational literatures that have historically developed in isolation. The first tradition encompasses positivity-preserving discretizations specific to the FPE. Its foundation lies in the Chang--Cooper (CC) scheme, \cite{ChangCooper1970}, an exponential-fitting construction closely related to the Scharfetter--Gummel flux used in semiconductor device modeling, \cite{ScharfetterGummel1969}. Discretizations within this family, including finite-volume descendants that achieve discrete entropy dissipation and exactly preserve steady states, \cite{BessemoulinChatard2012}, have been extensively analyzed and extended for Fokker--Planck operators in  \cite{LarsenEtAl1985,BuetDellacherie2010,MohammadiBorzi2015,PareschiZanella2018}. However, because these structure-preserving schemes are linear in the solution, they are governed by Godunov's theorem. Consequently, positivity is achieved either through severe restrictions on the time step and spatial mesh, or via a reduction to first-order accuracy in the convection-dominated limit. For the CC scheme, this accuracy reduction is global and unconditional, as quantified in \cref{sec:peclet}.

The second literature comprises the flux-corrected transport (FCT) tradition, which fundamentally accepts nonlinearity as the necessary cost for achieving second-order accuracy. Originating with the foundational work \cite{BorisBook1973} and Zalesak's multidimensional limiter, \cite{Zalesak1979}, this approach evolved through the total variation diminishing (TVD) frameworks, \cite{Harten1983,Sweby1984}. In the context of finite elements, these concepts have been adapted into algebraic flux correction, \cite{KuzminTurek2002,Kuzmin2009,FCT2012} with rigorous mathematical analyses addressing well-posedness and convergence in  \cite{BarrenecheaJohnKnobloch2016,BarrenecheaJohnKnobloch2024}. A critical distinction within this field separates TVD-type limiting, which strictly constrains local extrema, from positivity-only limiting, which constrains only the sign and thereby retains full accuracy at smooth extrema, \cite{ZhangShu2010,ZhangShu2011}. The spatial discretization proposed herein aligns with this latter tradition, but introduces two distinct departures: the limiter operates within an implicit banded M-matrix sweep rather than on an explicit update, in the factorized-solver idiom of \cite{Itkin3D,ItkinDF2026}, and mass conservation is inherited directly from the flux form during the implicit solve. As a result, exact conservation holds at every iteration and for all limiter values without requiring externally imposed constraints.

A third foundational strand concerns the positivity of the temporal discretization itself, delineating why the aforementioned barriers cannot be circumvented simply by employing a superior linear time-stepping method. It was established in \cite{BolleyCrouzeix1978} that any rational one-step method providing unconditional positivity for all linear positive systems is restricted to first-order accuracy, a temporal analogue to Godunov's spatial theorem. While strong-stability-preserving methods, e.g., \cite{GottliebShuTadmor2001,HundsdorferVerwer2003} respect this barrier by linking positivity to forward-Euler step restrictions, modified Patankar methods,  \cite{BurchardEtAl2003,KopeczMeister2018} bypass it nonlinearly for production-destruction systems. However, Patankar methods require a sign-separated decomposition of the generator, which is obstructed here by the wrong-signed band of the second-order convection stencil. In contrast, companion papers evade this barrier by restricting the class of matrices rather than the initial data. For eventually nonnegative generators,  \cite{NoutsosTsatsomeros2008,OleskyEtAl2009,DanersGlueckKennedy2016}, the exponential and resolvent are entrywise nonnegative above explicitly defined thresholds, see \cite{ItkinDF2026,ItkinKazbek2026ADI}. Building upon this, the coverage result established in \cref{sec:coverage} uniquely couples an FCT-type limiter below the threshold with an eventual-positivity window above it, spanning all possible step sizes, a synthesis without precedent in either literature.

To realize this methodology at large time steps, the proposed construction relies on an active-set solver detailed in \cref{sec:coverage}, which utilizes semismooth Newton methods standard in the theory of complementarity problems, \cite{QiSun1993,HintermuellerItoKunisch2002}. Their local superlinear convergence ensures that the nonlinear solve remains computationally affordable. The algorithm splits the second-order one-dimensional operator into a monotone core, comprising central diffusion and first-order upwind convection that form an M-matrix generator, and an antidiffusive correction formulated as a flux difference with a two-point numerical flux and zero column sums. The implicit directional step is executed via Picard iteration on this correction, where fluxes are constrained per interface by a Zalesak-type budget rule of \cite{BorisBook1973,Zalesak1979,FCT2012}.

We analyze two variants of this approach. Scheme FCDF-A leaves the correction unlimited and terminates at the last nonnegative iterate, acting as a literal analogue to the runtime rule in \cite{ItkinDF2026}. While computationally inexpensive, it rigidly discards the entire correction upon a single node's sign failure. Conversely, Scheme FCDF-B, the primary focus of this work, limits each flux individually, ensuring optimal correction retention. Crucially, Scheme FCDF-B guarantees unconditional positivity during the step, enforces a nonnegative right-hand side via the limiter while maintaining an inverse-positive core resolvent, and exactly conserves mass across all iterations and limiter states through the telescoping property of the limited fluxes.

The iteration contracts in the discrete $\ell_1$ norm, $\norm{\bm u}_1=\sum_i|u_i|$, the natural norm for probability densities. Its contraction factor is bounded by the purely convective ratio $c\,\gamma\bar\mu/h$, where $\gamma=\theta\Delta t$ is the implicit step parameter (the product of the scheme parameter $\theta$ and the time step $\Delta t$), $\bar\mu=\norm{\mu}_\infty$ is the maximum of the drift over the grid, $h$ is the spatial step, and $c$ is an absolute constant. There is no parabolic contribution since the diffusion resides entirely within the monotone implicit core $A_1$, the M-matrix generator that combines central diffusion with first-order upwind convection. The bound relies on the exact identity $\lVert(\mathcal I-\gamma A_1)^{-1}\rVert_1=1$, which follows directly from conservation and inverse-positivity. Finally, consistency is conditional, aligning precisely with Scheme B of \cite{ItkinDF2026}. Wherever the density satisfies the log-Lipschitz resolution condition, the budgets are not exceeded and the limiters equal one. In this regime, the converged iterate solves the full second-order system, and the limiter only activates where the density approaches the level of the truncation error.

The accuracy of the scheme at large P\'eclet numbers warrants distinct consideration. Its natural alternative, the CC scheme, \cite{ChangCooper1970}, exhibits a more severe form of degradation in this regime. The CC approach relies on exponential fitting, meaning the stencil is modified everywhere based on the cell P\'eclet number. In the convection-dominated limit, the fitted weights tend toward pure upwind differencing at every node. Consequently, the reduction to first-order accuracy is global, unconditional, and independent of the underlying solution. In contrast, the proposed FCDF degrades locally and adaptively. The limiter is driven by the sign of the right-hand side rather than the cell P\'eclet number, ensuring that second-order accuracy is preserved in every resolved region.

The reduction to the monotone core is restricted to unresolved layers of width $O(h/\mathrm{Pe}_h)$, and the price of a layer depends on what it contains. A tail layer, where the density itself is at the scale of the mesh, contributes $O(h)\cdot O(h)=O(h^2)$ to the $L_1$ error, so for smooth densities the global $L_1$ convergence remains second-order uniformly with respect to the P\'eclet number. An unresolved front of order-one height contributes $O(h)$, and \cref{sec:peclet} makes the dichotomy precise as a two-term error bound. The robustness in the smooth case is achieved through positivity-only limiting, \cite{ZhangShu2010,ZhangShu2011}. Unlike TVD limiting, which is at most first-order accurate at smooth extrema, the budget rule constrains only the sign, so it leaves smooth extrema untouched and triggers only where the density drops to the truncation-error level. Inside the layers the fallback to first order is the price the budgets exact for unconditional positivity, and it cannot be avoided by reverting to a linear discretization: by Godunov's theorem a positivity-preserving linear scheme is first-order everywhere, not only in the layers, so the nonlinear scheme confines to a few cells a loss that every linear competitor suffers globally.

Finally, the small-step schemes are designed to hand over to the linear theory at large step sizes, and the hand-over runs along two parallel tracks. The companion paper \cite{ItkinKazbek2026ADI} provides two linear positivity windows: the backward Euler resolvent is entrywise nonnegative for $\gamma\ge\gamma_0$ and is first-order in time, while the Pad\'e$(0,2)$ map is entrywise nonnegative for $\gamma\ge\gamma_r$ and is second-order in time and L-stable. On the nonlinear side, the Picard contraction requires $\gamma<\gamma_{\mathrm{pic}}=h/(2\bar\mu)$, with FCDF-B realizing the backward Euler step and its defect-corrected extension FCDF-DC, developed in \cref{sec:timeorder}, realizing a second-order step. Coverage of the step-size axis therefore reduces to two computable comparisons per mesh. If $\gamma_{\mathrm{pic}}\ge\min(\gamma_0,\gamma_r)$, the directional step admits a positive and exactly conservative realization for every step size. If moreover $\gamma_{\mathrm{pic}}\ge\gamma_r$, the realization can be chosen second-order in time as well as in space, wherever the density is resolved, again for every step size. The two conditions fare differently in practice. The first holds on the benchmark meshes, so positivity and conservation at all steps come from an explicit switchover between two mechanisms. The second fails on every mesh we tested, because the Pad\'e threshold is set by the deep-tail stationary mass, so full second order at large steps is delivered instead by the active-set reformulation described next.

If the regimes do not overlap, an active-set reformulation removes the step-size restriction from the nonlinear side. The key observation is that in one dimension, the full second-order system is banded and can be solved exactly in $O(n)$ operations. Consequently, the primary system inversion is highly efficient, and the nonlinearity is isolated entirely within the limiter. The procedure begins by solving the unlimited system and accepting the result if it is nonnegative. Otherwise, the limiter is resolved as a complementarity problem using a semismooth Newton iteration. The computational cost of this iteration is dictated by the size of the active set, the small subset of nodes where the positivity constraint binds, rather than by the magnitude of the time step. Since both stages of the defect-corrected step are solves of the same form, the active-set solver applies stage-wise, so the second-order branch\footnote{Throughout this paper, we use the term \emph{branch} to refer to a distinct variant, regime, or accuracy order of the scheme, not in the sense of bifurcation theory.} also extends beyond $\gamma_{\mathrm{pic}}$ when needed.

We present five main results. First, we introduce Schemes FCDF-A and FCDF-B, demonstrating their unconditional positivity, exact conservation across all limiter values, and convective contraction bounds (\cref{sec:limited}). Second, we establish a P\'eclet-uniform $L_1$ second-order accuracy statement and contrast this behavior with the Chang--Cooper scheme (\cref{sec:peclet}).

Third, we prove a two-part coverage corollary: under the computable condition $\gamma_{\mathrm{pic}} \ge \min(\gamma_0, \gamma_r)$ the directional step is positive and exactly conservative for every step size, and under the stronger condition $\gamma_{\mathrm{pic}}\ge\gamma_r$ it is in addition second-order in time and space wherever the density is resolved, pairing the FCDF schemes below the convective bound with the resolvent and Pad\'e$(0,2)$ windows of \cite{ItkinKazbek2026ADI} above it. The numerical study finds the first condition satisfied and the second violated on every mesh tested, so we also provide an active-set solver that closes the resulting gap, for the defect-corrected stages as well (\cref{sec:coverage}).

Fourth, we develop the defect-corrected scheme FCDF-DC, which restores second-order accuracy in time without requiring an explicit half-step (\cref{sec:timeorder}).
Finally, we present numerical experiments that systematically quantify each of these theoretical claims (\cref{sec:numerics}).

The remainder of the paper is organized to systematically build these results. \Cref{sec:setting} defines the one-dimensional setting and introduces the core-correction split. Building on this foundation, \cref{sec:limited} constructs and analyzes the limited iteration scheme, and \cref{sec:peclet} evaluates its accuracy at large P\'eclet numbers. \Cref{sec:timeorder} then develops the defect-corrected scheme FCDF-DC, which restores second order in time. \Cref{sec:coverage} combines the nonlinear schemes with the linear windows of \cite{ItkinKazbek2026ADI} into coverage of all step sizes and details the active-set solver. Finally, \cref{sec:numerics} presents our numerical experiments, and \cref{sec:conclusion} offers concluding remarks.

\section{The 1D setting and the core-correction split} \label{sec:setting}

We consider the one-dimensional FPE in divergence form,
\begin{equation}   \label{eq:fpe1d}
\frac{\partial p}{\partial t} = -\frac{\partial J}{\partial x}, \qquad
J(x,t) = \mu(x,t)\,p - \frac{\partial}{\partial x}\bigl[D(x,t)\,p\bigr],
\qquad D(x,t) \ge 0,
\end{equation}
where $p(x,t)$ is the probability density function, $t$ is time, $x$ is the spatial coordinate, $\mu(x,t)$ is the drift (or convection) coefficient, $D(x,t)$ is the diffusion coefficient, and $J(x,t)$ is the probability flux. Total probability changes only through boundary fluxes and is preserved exactly under zero-flux conditions. Following \cite{ItkinDF2026}, we assume $\mu(x,t)>0$ throughout this section. The case $\mu<0$ admits a fully symmetric treatment with the upwind direction reversed, and for sign-changing $\mu$ the stencil is applied directionally at each node.

The equation is discretized on a uniform grid $x_i = x_1+(i-1)h$, $i=1,\ldots,n$, by the second-order DF operator of \cite{ItkinDF2026}, which we now recall explicitly. The advective term is approximated by the second-order backward (upwind) difference,
\begin{equation}   \label{eq:FB2}
\frac{\partial}{\partial x}[\mu p]\bigg|_{x_i} \approx
\frac{3(\mu p)_i - 4(\mu p)_{i-1} + (\mu p)_{i-2}}{2h},
\end{equation}
consistent with $\mu>0$, and the diffusive term by the centered difference,
\begin{equation} \label{eq:SC2}
\frac{\partial^2}{\partial x^2}[D p]\bigg|_{x_i} \approx
\frac{(Dp)_{i+1} - 2(Dp)_i + (Dp)_{i-1}}{h^2},
\end{equation}
where $\mu_i=\mu(x_i,t)$ and $D_i=D(x_i,t)$. Combining the two in \eqref{eq:fpe1d} gives, at interior nodes,
\begin{equation}   \label{eq:A2row}
\dot{p}_i = \alpha_i\,p_{i-2} + \beta_i\,p_{i-1} + \gamma_i\,p_i + \delta_i\,p_{i+1},
\end{equation}
with coefficients
\begin{equation}   \label{eq:A2coeffs}
\alpha_i = -\frac{\mu_{i-2}}{2h}, \qquad
\beta_i  = \frac{D_{i-1}}{h^2} + \frac{2\mu_{i-1}}{h}, \qquad
\gamma_i = -\frac{2D_i}{h^2} - \frac{3\mu_i}{2h}, \qquad
\delta_i = \frac{D_{i+1}}{h^2}.
\end{equation}
These rows assemble the operator $A_2\in\mathbb{R}^{n\times n}$, with the near-boundary row, for which $x_{i-2}$ falls outside the grid, discretized by the first-order upwind stencil, and with the boundary rows modified so that the discrete fluxes through the first and the last interface vanish. Under this zero-flux closure the operator telescopes and $\bm{1}^\top A_2=0$, where $\bm{1}$ is the column vector of all ones, which is the discrete statement of mass conservation. The wrong-signed band responsible for the loss of monotonicity is visible directly in \eqref{eq:A2coeffs}: the entry $\alpha_i<0$ sits below the subdiagonal, so $A_2$ is not a Metzler matrix, and it is precisely this entry that makes $A_2$ an eventually nonnegative (EM) generator rather than an M-matrix one, \cite{ItkinDF2026}.

Both stencils admit a discrete flux form. Writing $g=\mu p$, the advective difference \eqref{eq:FB2} equals $[\hat g_{i+1/2}-\hat g_{i-1/2}]/h$ with the second-order upwind numerical flux $\hat g^{(2)}_{i+1/2}=\tfrac12(3g_i-g_{i-1})$, while the first-order upwind flux is $\hat g^{(1)}_{i+1/2}=g_i$. The diffusive difference \eqref{eq:SC2} is a flux difference of $[(Dp)_{i+1}-(Dp)_i]/h$. This telescoping structure is what the limiter of \cref{sec:limited} acts on.

We now split $A_2$ into a monotone core and an antidiffusive correction,
\begin{equation}\label{eq:coresplit}
A_2 \;=\; A_1 + C .
\end{equation}
The core $A_1$ combines the centered diffusion \eqref{eq:SC2} with first-order upwind convection. Its interior rows are
\begin{equation}\label{eq:A1coeffs}
\dot{p}_i = \beta'_i\,p_{i-1} + \gamma'_i\,p_i + \delta'_i\,p_{i+1},
\qquad
\beta'_i  = \frac{D_{i-1}}{h^2} + \frac{\mu_{i-1}}{h}, \qquad
\gamma'_i = -\frac{2D_i}{h^2} - \frac{\mu_i}{h}, \qquad
\delta'_i = \frac{D_{i+1}}{h^2},
\end{equation}
so $A_1$ is tridiagonal, Metzler, irreducible, and under the same zero-flux closure it is an M-matrix generator with $\bm{1}^\top A_1=0$. The correction $C=A_2-A_1$ contains no diffusion. Its interior rows follow by subtracting \eqref{eq:A1coeffs} from \eqref{eq:A2coeffs},
\begin{equation}\label{eq:corrC}
(C\bm p)_i \;=\; \frac{-\mu_{i-2}\,p_{i-2} + 2\mu_{i-1}\,p_{i-1} - \mu_i\,p_i}{2h},
\end{equation}
and, equivalently, $C$ is a flux difference with the two-point correction flux
\begin{equation}\label{eq:corrflux1}
(C\bm p)_i \;=\; -\,\frac{d_{i+1/2}(\bm p)-d_{i-1/2}(\bm p)}{h},
\qquad
d_{i+1/2}(\bm p) \;=\; \hat g^{(2)}_{i+1/2}-\hat g^{(1)}_{i+1/2}
\;=\; \frac{(\mu p)_i-(\mu p)_{i-1}}{2}\,.
\end{equation}
The correction flux is the discrete antidiffusion of the second-order upwind scheme: it is proportional to the one-sided difference of $\mu p$ and carries no factor of $D$. Under the zero-flux closure $C$ telescopes as well, $\bm{1}^\top C=0$, so the split \eqref{eq:coresplit} preserves the conservation structure term by term. In the mirrored case $\mu<0$ the two upwind nodes lie to the right of the interface and the correction flux becomes $d_{i+1/2}=\tfrac12\bigl[(\mu p)_{i+1}-(\mu p)_{i+2}\bigr]$, and for sign-changing drift the two orientations are combined interface by interface.

The spatial discretization turns \eqref{eq:fpe1d} into the linear system of ordinary differential equations $\dot{\bm p}(t)=A_2\,\bm p(t)$, where $\bm p(t)\in\mathbb{R}^n$ collects the nodal values of the density. Let $\Delta t>0$ be the time step, let $t_n=n\Delta t$, and let $\bm p^{\,n}$ denote the numerical approximation to $\bm p(t_n)$. The classical one-parameter $\theta$-scheme advances the solution by
\begin{equation}\label{eq:thetascheme}
\bigl(\mathcal I-\theta\Delta t\,A_2\bigr)\,\bm p^{\,n+1} \;=\; \bigl(\mathcal I+(1-\theta)\Delta t\,A_2\bigr)\,\bm p^{\,n},
\qquad \theta\in[0,1],
\end{equation}
where $\mathcal I$ is the identity matrix. The choice $\theta=1$ gives the backward Euler scheme, which is fully implicit and first-order accurate in time. The choice $\theta=\tfrac12$ gives the trapezoidal (Crank--Nicolson) scheme, which is second-order accurate in time.

Every member of this family requires the solution of one linear system per step, and the matrix of that system is the same up to the scalar in front of $A_2$. We therefore abstract the implicit directional step studied throughout the paper as
\begin{equation}\label{eq:implicitstep}
\bigl(\mathcal I-\gamma A_2\bigr)\,\bm p \;=\; \bm b, \qquad \gamma=\theta\Delta t,
\end{equation}
where $\bm b$ is the known right-hand side assembled from the previous time level. For backward Euler $\bm b=\bm p^{\,n}$, so $\bm b\ge0$ whenever the previous iterate is nonnegative. For the trapezoidal rule $\bm b=\bigl(\mathcal I+\tfrac{\Delta t}{2}A_2\bigr)\bm p^{\,n}$, and the nonnegativity of this explicit half is a separate question. It is taken up in \cref{sec:timeorder}, where a defect-corrected stepping restores second order in time using only solves of the form \eqref{eq:implicitstep} with nonnegative right-hand sides. In \cref{sec:limited,sec:peclet,sec:coverage} we accordingly assume $\bm b\ge0$ and study the solution of \eqref{eq:implicitstep}.

\section{The flux-limited Picard iteration} \label{sec:limited}

In this section we consider two possible schemes of solving \eqref{eq:implicitstep}

\myparagraph{Scheme FCDF-A: the global stopping rule.}
The step \eqref{eq:implicitstep} is realized by Picard iteration on the correction,
\begin{equation}\label{eq:picard1d}
\bigl(\mathcal I-\gamma A_1\bigr)\,\bm p^{[k+1]} \;=\; \bm b \;+\; \gamma\, C\,\bm p^{[k]},
\qquad \bm p^{[0]}=\bm b ,
\end{equation}
each sweep being one banded M-matrix solve at $O(n)$ cost. Scheme FCDF-A terminates the iteration at the last iterate whose right-hand side is entrywise nonnegative, in exact analogy with the runtime rule of the mixed-derivative solver in \cite{ItkinDF2026}. Since $(\mathcal I-\gamma A_1)^{-1}\ge0$ unconditionally, every accepted iterate is nonnegative; since $\bm1^\top A_1=\bm1^\top C=0$, every iterate conserves mass exactly. The scheme is simple and nearly free to implement, but the stopping rule is global: a sign failure at a single node discards the correction everywhere in that sweep. Scheme FCDF-B removes this defect.

\myparagraph{Scheme FCDF-B: per-interface limiting.}
Introduce per-interface limiters $\theta_{i+1/2}\in[0,1]$ and the limited correction
\begin{equation}\label{eq:limcorr}
\bigl(C_\theta\,\bm p\bigr)_i \;=\; -\,\frac{\theta_{i+1/2}\,d_{i+1/2}(\bm p)-\theta_{i-1/2}\,d_{i-1/2}(\bm p)}{h},
\end{equation}
and iterate
\begin{equation}\label{eq:picardlim}
\bigl(\mathcal I-\gamma A_1\bigr)\,\bm p^{[k+1]} \;=\; \bm b \;+\; \gamma\, C_\theta\bigl(\bm p^{[k]}\bigr),
\qquad \bm p^{[0]}=\bm b .
\end{equation}
The limiters are chosen by the two-point Zalesak rule \cite{Zalesak1979}: each interface correction transfers the amount $\gamma\,\theta_{i+1/2}\,d_{i+1/2}/h$ between the two adjacent nodes, decreasing one (the donor) and increasing the other by exactly the same amount. Denote by $j(i{+}1/2)$ the donor node of interface $i+1/2$, i.e. $j=i$ if $d_{i+1/2}>0$ and $j=i+1$ if $d_{i+1/2}<0$, and by $\kappa_j\le2$ the number of interfaces drawing on node $j$ in the current sweep. Then
\begin{equation}\label{eq:zalesak}
\theta_{i+1/2} \;=\; \min\Bigl(1,\;\frac{h\,b_{j(i+1/2)}}{\kappa_{j}\;\gamma\,\bigl|d_{i+1/2}\bigl(\bm p^{[k]}\bigr)\bigr|}\Bigr),
\end{equation}
so that the total withdrawal from any node never exceeds half of its available budget per adjacent interface, which is the even budget split $\kappa_j\equiv2$. The limiters are recomputed from the current iterate at every sweep. This costs nothing analytically: the limited flux is then the clamp of the unlimited flux onto the fixed interval set by the caps, a clamp is nonexpansive, and the fixed-point analysis of \cref{prop:schemeBp} covers the resulting nonlinear sweep map with the same contraction constant as the linear one. The cheaper variant, with limiters computed once on the first sweep and frozen thereafter, is discussed in the remark after.

\begin{proposition}[Scheme FCDF-B]\label{prop:schemeBp}
Let $\bm b\ge0$, let $A_1$ be the irreducible M-matrix core of \eqref{eq:coresplit}, let $\bar\mu:=\norm{\mu}_\infty$, and let the limiter \eqref{eq:zalesak} use the even budget split $\kappa_j\equiv2$ and be recomputed from the current iterate at every sweep. Then, for every $\gamma>0$:
\begin{enumerate}
\item[\textup{(i)}] (Unconditional positivity) Every iterate of \eqref{eq:picardlim} satisfies $\bm p^{[k]}\ge0$, $k\ge0$. No window and no threshold in $\gamma$ is required: the limiter caps the withdrawal from every node by half of its budget per adjacent interface, so the right-hand side is nonnegative for any input, and $(\mathcal I-\gamma A_1)^{-1}\ge0$ for every $\gamma>0$.
\item[\textup{(ii)}] (Exact conservation for every limiter value) $\bm1^\top\bm p^{[k]}=\bm1^\top\bm b$ for all $k\ge0$, irrespective of the limiter values. Limiting acts on fluxes, not on point values, so every limited interface flux enters two rows with opposite signs and the limited correction telescopes, while the left-hand side has unit column sums.
\item[\textup{(iii)}] (Contraction) $\bigl\|(\mathcal I-\gamma A_1)^{-1}\bigr\|_1=1$ exactly, by conservation and inverse-positivity. The limited flux is the clamp of the unlimited flux onto a fixed interval containing zero, hence nonexpansive, and one sweep of the iteration is a Lipschitz map with constant $q \;=\; \frac{2\gamma\bar\mu}{h}$. For $q<1$, i.e. $\gamma<\gamma_{\mathrm{pic}}:=h/(2\bar\mu)$, the sweep map is a contraction in $\ell_1$ and the iteration converges geometrically to a unique nonnegative fixed point $\bm p^{*}$. We call $\gamma_{\mathrm{pic}}$ the Picard contraction threshold. It coincides in form with a convective Courant-number bound, but it is not a stability or positivity restriction: parts (i) and (ii) hold for every $\gamma$, and the threshold bounds only the convergence of the inner iteration, which \cref{ssec:activeset} removes.

\item[\textup{(iv)}] (Conditional consistency) At every node whose two adjacent caps are slack at the fixed point, $\gamma\,|d_{i\pm1/2}(\bm p^{*})|\le h\,b_{j}/2$ with $j$ the respective donor node, the limiters equal one and the corresponding row of the fixed-point system coincides with the full second-order system \eqref{eq:implicitstep}. If the caps are slack at every interface, $\bm p^{*}$ is exactly the unlimited second-order solution. The slack condition holds, in particular, wherever the fixed point satisfies the log-Lipschitz resolution condition of \cite{ItkinDF2026} and the budget is comparable to the density, so the limiter can activate only at nodes where $\bm p^{*}$ is at the level of the local truncation error.
\end{enumerate}
\end{proposition}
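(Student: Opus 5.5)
The four parts follow from three structural facts about the core--correction split: $A_1$ is an irreducible M-matrix generator, so $\mathcal I-\gamma A_1$ is a nonsingular M-matrix; $\bm1^\top A_1=0$; and the limited correction $C_\theta$ is a flux difference. I will take the parts in order.

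\emph{Part (ii).} I dispose of it first because part (iii) uses it. Summing \eqref{eq:limcorr} over all rows telescopes: each term $\theta_{i+1/2}d_{i+1/2}$ enters row $i$ with a minus sign and row $i+1$ with a plus sign. The boundary interfaces carry zero flux under the closure. Hence $\bm1^\top C_\theta(\bm q)=0$ for every $\bm q$ and every choice of limiters. Since $\bm1^\top(\mathcal I-\gamma A_1)=\bm1^\top$, left-multiplying \eqref{eq:picardlim} by $\bm1^\top$ gives $\bm1^\top\bm p^{[k+1]}=\bm1^\top\bm b$. The base case $\bm p^{[0]}=\bm b$ is trivial.

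\emph{Part (i).} Inverse-positivity $(\mathcal I-\gamma A_1)^{-1}\ge0$ holds for all $\gamma>0$. The reason is that $\mathcal I-\gamma A_1$ has nonpositive off-diagonal entries and is strictly column diagonally dominant after adding $\mathcal I$, so it is a nonsingular M-matrix. It therefore suffices to show that the right-hand side $\bm b+\gamma C_\theta(\bm p^{[k]})$ is nonnegative.

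At node $j$ the correction only adds, except through the interfaces for which $j$ is the donor. There are at most two such interfaces. By \eqref{eq:zalesak} with $\kappa_j=2$, each withdraws $\gamma\theta|d|/h\le b_j/2$. The total withdrawal is therefore at most $b_j$, and the right-hand side entry is at least $b_j-b_j\ge0$.

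The argument never requires $\bm p^{[k]}\ge0$. Positivity of the iterates then follows by induction, although it is not even needed for the step itself.

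\emph{Part (iii).} For the norm identity, let $R=(\mathcal I-\gamma A_1)^{-1}\ge0$. From $\bm1^\top R=\bm1^\top$, every column of $R$ is nonnegative with sum $1$. Hence $\|R\|_1=\max_j\sum_i|R_{ij}|=1$.

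For the Lipschitz bound, write the limited flux as $\mathrm{clamp}_{[-c_{i+1/2},c_{i+1/2}]}(d_{i+1/2}(\bm p))$ with cap $c_{i+1/2}=hb_j/(2\gamma)$. I need to check this identification, and the check has one subtlety. The cap depends only on $\bm b$, but the donor index depends on the sign of $d$. Since the interval is symmetric, the clamp formula is nonetheless correct: it clamps $d$ to $[-hb_{i}/(2\gamma),\,hb_{i+1}/(2\gamma)]$ appropriately oriented, so the interval is fixed, contains $0$, and is independent of $\bm p$.

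Clamps are $1$-Lipschitz, so the difference of limited fluxes satisfies
$$|\delta f_{i+1/2}|\le|\delta d_{i+1/2}|\le\tfrac12(\bar\mu|\delta p_i|+\bar\mu|\delta p_{i-1}|).$$
Summing, we get $\|C_\theta(\bm p)-C_\theta(\bm q)\|_1\le\frac{2}{h}\sum_{i}|\delta f_{i+1/2}|\le\frac{2\bar\mu}{h}\|\bm p-\bm q\|_1$. The first inequality holds because each flux appears in two rows; in the second, each node appears in two fluxes with weight $1/2$.

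Composing with $\gamma R$ gives the constant $q=2\gamma\bar\mu/h$. Banach's fixed-point theorem on the closed nonnegative cone then yields a unique fixed point $\bm p^*\ge0$ and geometric convergence. The cone is invariant by part (i).

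The main obstacle is this step: making the clamp representation rigorous when $d$ changes sign, so that the caps are truly fixed. I expect it to work because the cap on each side of the interface uses the budget of the node on that side.

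\emph{Part (iv).} If both adjacent caps are slack at $\bm p^*$, then $\theta_{i\pm1/2}=1$, so row $i$ of $C_\theta(\bm p^*)$ equals $(C\bm p^*)_i$. Row $i$ of the fixed-point equation then reads $((\mathcal I-\gamma A_1-\gamma C)\bm p^*)_i=b_i$, which is row $i$ of \eqref{eq:implicitstep}. If the caps are slack at every interface, $\bm p^*$ solves the full system, which has a unique solution by contraction, or equivalently by invertibility.

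For the resolution statement, $|d_{i+1/2}|=\tfrac12|(\mu p)_i-(\mu p)_{i-1}|$. Under the log-Lipschitz condition $|\partial_x\log(\mu p)|\le L$, this quantity is $O(hL\,\mu p)$. The slack condition then reduces to $\gamma\bar\mu L\,p_j\lesssim b_j$, which holds when $b_j\sim p_j$ and $\gamma$ is bounded as in part (iii). I will present this last part as a sufficient condition, citing \cite{ItkinDF2026}, rather than proving it sharply.
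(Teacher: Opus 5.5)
Your proposal is correct and follows essentially the same route as the paper: the core resolvent is a nonsingular M-matrix with unit column sums, the limiter is rewritten as a clamp, withdrawals are counted node by node, the limited fluxes telescope, two-row counting with Banach gives $q=2\gamma\bar\mu/h$, and consistency is read off row by row. One slip needs fixing in the step you flagged as the main obstacle: the clamp interval is not symmetric, and your displayed endpoints are swapped. Since a positive $d_{i+1/2}$ withdraws from node $i$, the interval is $[-hb_{i+1}/(2\gamma),\,hb_i/(2\gamma)]$. The paper checks this case by case, with $\theta d=\min(d,c^{+})$ for $d>0$ and $\theta d=\max(d,-c^{-})$ for $d<0$, which is exactly the resolution you anticipate in your last sentence of part (iii).
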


\begin{proof}[Proof sketch]
The core resolvent $(\mathcal I-\gamma A_1)^{-1}$ is entrywise positive with unit column sums, so its induced $\ell_1$ norm equals one exactly. The limited flux produced by \eqref{eq:zalesak} is the clamp of the unlimited flux onto a fixed interval set by the budget caps, an interval containing zero because $\bm b\ge0$. Positivity then follows by counting withdrawals: each of the two interfaces adjacent to a node can remove at most half of that node's budget, so the right-hand side of every sweep is nonnegative for any input, and the inverse-positive core preserves the sign. Conservation follows because every clamped interface flux enters the divergence in exactly two rows with opposite signs, so the limited correction telescopes for every limiter value. For the contraction, the clamp is nonexpansive, so one sweep is Lipschitz with the same constant as the unlimited linear map, and the flux coefficients bound that constant by $2\gamma\bar\mu/h$ with no contribution from $D$. Consistency at slack caps is read off row by row from the fixed-point equation, and the log-Lipschitz estimate bounds the correction flux by a multiple of $h$ times the local density, which keeps the caps slack wherever the budget is comparable to the density. The full proof is given in \cref{appSchemeB}.
\end{proof}

The following remarks provide further clarity on this proposition. First, the $\beta$-shift device of the mixed-derivative solver in \cite{Itkin3D,ItkinDF2026} does not transfer here: the iterated term $\gamma C\bm p^{[k]}$ grows linearly in $\gamma$ while the resolvent gain is pinned to exactly one by conservation, so no diagonal shift can improve the rate. The removal of the restriction at large $\gamma$ is achieved instead by the active-set solver of \cref{ssec:activeset} and, independently, by the linear resolvent branch of \cref{sec:coverage}.

Second, freezing the limiters after the first sweep makes each subsequent sweep linear and marginally cheaper, and the contraction bound of \cref{prop:schemeBp}(iii) applies to it as a special case, since $\norm{C_\theta}_1\le\norm{C}_1$ uniformly in $\theta$. What freezing loses is the positivity guarantee for the intermediate iterates: the caps are calibrated to the fluxes of the first iterate, and nothing prevents a later iterate from violating them, so \cref{prop:schemeBp}(i) applies to the frozen variant only at the first sweep and, when the caps happen to remain slack, at the limit. The recomputed variant is therefore the default throughout the paper.

Third, the limiter \eqref{eq:zalesak} is flux-corrected transport \cite{BorisBook1973,Zalesak1979} and algebraic flux correction \cite{FCT2012} transplanted into the factorized M-matrix solver idiom of the DF papers, with two differentiators: exact conservation is inherited from the flux form through the implicit sweep, not imposed a posteriori, and the scheme is designed to hand over to the linear resolvent theory of \cite{ItkinKazbek2026ADI} at large steps (\cref{sec:coverage}). Modified Patankar schemes achieve unconditional positivity at second order for production--destruction systems \cite{BurchardEtAl2003}, but require a sign-separated decomposition of the generator, which the wrong-signed band of $A_2$ obstructs; a shift large enough to separate signs reintroduces the same difficulty in another form.

\subsection{Accuracy at large P\'eclet numbers} \label{sec:peclet}

The CC scheme of \cite{ChangCooper1970} and the FCDF-B scheme degrade in different ways. CC is exponential fitting: the stencil is modified everywhere according to the local cell P\'eclet number $\mathrm{Pe}_h=\mu h/D$, and as $\mathrm{Pe}_h\to\infty$ the fitted weights tend to pure upwind at every node. The degradation to first order is therefore global, unconditional, and independent of the solution.

The FCDF-B scheme degrades locally and adaptively. The limiter is driven by the sign of the right-hand side, not by $\mathrm{Pe}_h$, so the reduction to the monotone core is confined to unresolved layers of physical width $O(D/\mu)=O(h/\mathrm{Pe}_h)$, at most a few cells, while away from the layers the scheme coincides with the unlimited second-order discretization by \cref{prop:schemeBp}(iv).

To quantify the price of the layers we first fix the terminology. A front is a region where the density transitions between two levels over the physical width set by the convection-diffusion balance, $\ell=D/\mu$. The front is resolved when $\ell\ge h$ and unresolved when $\ell<h$, which is precisely the condition $\mathrm{Pe}_h>1$, so that the entire transition falls inside a single cell and adjacent nodal values differ by the full height of the jump. The height of the front is the scale of the levels it connects. A front of height $O(h)$ occurs in the tails of a smooth density, where the resolution condition of \cref{prop:schemeBp}(iv) fails only because the density itself has dropped to the truncation scale. A front of height $O(1)$ connects two order-one levels, and there the limiter has no small parameter to hide behind: the antidiffusive flux at the offending interfaces exceeds any admissible budget, the limiter clamps it, and the scheme locally falls back to the monotone core.

A concrete example arises already for the single resolvent problem \eqref{eq:implicitstep}. Take $\mu=1$, $D=10^{-4}$ and $h=10^{-2}$ on the unit interval, so that $\mathrm{Pe}_h=100$, and let $\bm b$ be the nodal values of the indicator of $[0,\tfrac12]$. The exact solution of $(1-\gamma\mathcal L)u=b$ is a smoothed step: the discontinuity of $b$ is displaced downstream by a distance of order $\gamma\mu$, smaller than one cell for $\gamma\le h/(2\bar\mu)$, and smeared over a width of order $\max(\sqrt{\gamma D},\,D/\mu)\approx10^{-3}$, which is below the mesh size. On the grid the front appears as a pair of adjacent nodes carrying values close to one and close to zero. The budgets at the two or three interfaces around this pair are exceeded, the limiter activates, and the front is resolved at first order over a few cells. The resulting $L_1$ cost is the height times the width, $O(1)\cdot O(h)=O(h)$. This is the situation the edges of a compactly supported density create in the advection-dominated benchmark. The OU benchmark, by contrast, contain no such front: its profile is Gaussian-like, the resolution condition fails only in the tails, and every layer there has height $O(h)$, costing $O(h)\cdot O(h)=O(h^2)$.

A third regime deserves separate mention because it escapes pointwise error analysis altogether. Under zero-flux boundary conditions with the drift directed into a wall, the density accumulates in a boundary layer of width $\ell=D/\mu$ with peak height of order $1/\ell$, carrying an order-one probability mass. In the numbers above this is a spike of height $10^{4}$ and width $10^{-4}$. When $\ell<h$ this profile is a near-delta that no fixed-order discretization represents in a nodal $L_1$ norm: the discrete solution necessarily spreads the unit mass over at least one cell of width $h$, while the exact density concentrates it in $\ell$, and the nodal comparison saturates at $O(1)$ regardless of the scheme. This is not a failure of the FCDF construction but of the metric, since the exact solution leaves the space where nodal comparison is meaningful. The appropriate assessment there is the mass per cell or a weak norm, and the numerical section reports the boundary-layer cases in that form.

The taxonomy is therefore threefold. Tail layers of height $O(h)$ cost $O(h^2)$ and are the generic case for smooth Fokker--Planck densities. Interior unresolved fronts of height $O(1)$ cost $O(h)$, which is the monotone floor at a front and cannot be improved by any scheme in the class considered here. Sub-cell boundary accumulation falls outside nodal $L_1$ assessment and is measured by cellwise mass. The proposition below makes the first two regimes precise as a two-term error bound: a smooth contribution of size $O(h^2)$, uniform in the P\'eclet number, plus a layer contribution proportional to the number of layer interfaces and to the solution scale inside them. The third regime is excluded by the assumption of a bounded layer strength and is treated separately in \cref{sec:numerics}.

The two failure modes are distinguished by where the loss appears rather than by its size at any single interface, and this has a consequence for how the distinction can be observed. At an unresolved front of order-one height both schemes sit on the monotone floor, so their observed orders are close and a comparison there says little about the mechanism. The distinction is measurable on a smooth, strictly positive solution at large P\'eclet number. There the FCDF budgets are never exceeded, the limiter stays inactive, and the scheme retains its full order however large $\mathrm{Pe}_h$ becomes. The CC stencil is modified at every node regardless, because the fitting weight depends on the cell P\'eclet number and not on the solution, so its order drops. \Cref{ssec:numPeclet} accordingly reports two experiments, a smooth one for the contrast and a front one for the floor.

We compare the FCDF-B fixed point with the exact solution of the resolvent problem. Let $u$ solve $(1-\gamma\mathcal{L})u=b$ with the zero-flux boundary conditions, where $\mathcal{L}u=-(\mu u)'+(Du)''$, let $\bm u$ collect its nodal values, let $\hat{\bm p}$ denote the unlimited solution of \eqref{eq:implicitstep}, and let $\bm p^{*}$ be the FCDF-B fixed point of \cref{prop:schemeBp}. Errors are measured in the discrete norm $\norm{\bm v}_{1,h}=h\sum_i|v_i|$.

We formalize the layer structure through three assumptions. First, the step satisfies $\gamma\le\kappa\,\gamma_{\mathrm{pic}}$ for a fixed $\kappa\in(0,1)$. Second, there is a set $\mathcal A$ of at most $N_L$ interfaces, the layer interfaces, containing the boundary interfaces, the near-boundary first-order rows, and the active set of the limiter at $\hat{\bm p}$, such that on every cell not touching $\mathcal A$ the exact solution obeys the P\'eclet-uniform bound $|(\mu u)''|+|(Du)'''|+|J'''|\le M_S$, with $J$ the exact flux of \eqref{eq:fpe1d}. Third, the layer strength $S_L$ bounds the solution scale at the layer interfaces, $|d_{i+1/2}(\hat{\bm p})|\le\bar\mu S_L$ and $|J|\le\bar\mu S_L$ on $\mathcal A$, and it is precisely this boundedness that excludes the sub-cell boundary-accumulation regime of the taxonomy above, where the density scale at the wall grows like $1/\ell=\mathrm{Pe}_h/h$ and no bound on $S_L$ uniform in the mesh and the P\'eclet number exists.

\begin{proposition}[P\'eclet-uniform $L_1$ error bound]\label{prop:peclet}
Under the three assumptions above there is a constant $C(\kappa)$, independent of $h$, $D$, and $\mathrm{Pe}_h$, such that
\begin{equation}\label{eq:pecletbound}
\norm{\bm p^{*}-\bm u}_{1,h} \;\le\; C(\kappa)\,\Bigl[\,M_S\,|\Omega|\;h^2 \;+\; N_L\,S_L\;h\,\Bigr],
\end{equation}
where $|\Omega|$ is the length of the computational interval. In particular:
\begin{enumerate}
\item[\textup{(a)}] If every layer carries density at scale $S_L\le C_0h$, which is the positivity-limiting regime where the limiter activates only near vanishing density, then $\norm{\bm p^{*}-\bm u}_{1,h}=O(h^2)$ uniformly in $\mathrm{Pe}_h$.
\item[\textup{(b)}] If a layer carries an unresolved front of height $S_L=O(1)$, the bound degrades to $O(h)$, and the loss enters \eqref{eq:pecletbound} only through the $N_L$ layer interfaces, while the smooth contribution remains $O(h^2)$.
\end{enumerate}
\end{proposition}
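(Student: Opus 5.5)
Split the error through the unlimited solution, $\bm p^{*}-\bm u=(\bm p^{*}-\hat{\bm p})+(\hat{\bm p}-\bm u)$, and bound each piece with the $\ell_1$ stability that \cref{prop:schemeBp}(iii) provides. The stability tool is the following. For $\gamma\le\kappa\gamma_{\mathrm{pic}}$ we have $\|(\mathcal I-\gamma A_1)^{-1}\|_1=1$ and $\gamma\|C\|_1\le q\le\kappa<1$. Writing $\mathcal I-\gamma A_2=(\mathcal I-\gamma A_1)\bigl(\mathcal I-\gamma(\mathcal I-\gamma A_1)^{-1}C\bigr)$ and applying a Neumann series gives
\begin{equation*}
\|(\mathcal I-\gamma A_2)^{-1}\|_1\le(1-\kappa)^{-1},
\end{equation*}
and the same bound holds for any limited $C_\theta$. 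This bound is uniform in $D$ and $\mathrm{Pe}_h$, because the diffusion lives entirely in the core. It is the source of the constant $C(\kappa)$.

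\emph{Step 1 (truncation error of the unlimited scheme).} Write $\hat{\bm p}-\bm u=(\mathcal I-\gamma A_2)^{-1}\bm\tau$ with $\bm\tau=(\mathcal I-\gamma A_2)\bm u-\bm b=\gamma(\mathcal L u-A_2\bm u)$ nodewise. Everything must be done in flux form, so that $\bm\tau=-\gamma(\bm F_{i+1/2}-\bm F_{i-1/2})/h$ with $F$ the difference between the discrete and exact interface fluxes. On cells away from $\mathcal A$, Taylor expansion gives two facts. The second-order upwind flux $\tfrac12(3g_i-g_{i-1})$ differs from $g(x_{i+1/2})$ by $O(h^2|g''|)$. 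The centered diffusive flux differs from $(Du)'(x_{i+1/2})$ by $O(h^2|(Du)'''|)$. The $J'''$ term controls the remaining mismatch between nodal and midpoint divergence. All three are bounded by $M_S h^2$ through the P\'eclet-uniform assumption.

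This gives $\|\bm\tau\|_{1,h}\lesssim \gamma M_S|\Omega|h$, which by itself is only first order. The task is to recover the extra factor $h$ in a way that does not depend on $\mathrm{Pe}_h$. I would use the flux form here. The sum $h\sum_i|(F_{i+1/2}-F_{i-1/2})/h|$ can be traded by summation by parts against the structure $(\mathcal I-\gamma A_2)^{-1}D_h^{-}$, where $D_h^-$ is the backward difference. I would prove that $\gamma\|(\mathcal I-\gamma A_2)^{-1}D_h^-/h\|_1\le C(\kappa)$, using that $\gamma/h\cdot\bar\mu\le\kappa/2$ and that $\bm 1^\top A_1=0$ controls the diffusive part.

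This is the main obstacle. A bound of the form $\gamma/h\cdot\|\cdot\|$ loses a factor $D/(\bar\mu h)$ in the diffusive part unless the M-matrix structure is exploited. My first attempt would be the identity $\gamma(\mathcal I-\gamma A_1)^{-1}A_1=(\mathcal I-\gamma A_1)^{-1}-\mathcal I$, which has $\ell_1$ norm at most $2$. It absorbs the diffusive flux-difference part exactly, since that part is $A_1$ applied to a potential. The convective part is handled by the $\gamma\bar\mu/h\le\kappa/2$ bound.

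At layer interfaces, the flux defect is instead bounded by $O(\bar\mu S_L)$ by the third assumption. Each such interface therefore contributes $O(S_L h)$ after the same summation by parts. This gives the term $N_L S_L h$.

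\emph{Step 2 (limiter error).} Write $\bm p^{*}-\hat{\bm p}=(\mathcal I-\gamma A_2)^{-1}\gamma(C_\theta(\bm p^*)-C\bm p^{*})$. The source $\gamma(C_\theta-C)\bm p^*$ is a flux difference of $(1-\theta_{i+1/2})d_{i+1/2}(\bm p^*)$, and it is supported only on interfaces where the caps bind. By \cref{prop:schemeBp}(iv) these are contained in $\mathcal A$; the active set is assumed inside $\mathcal A$, and I would argue stability of the active set under the small perturbation. Each such flux is bounded by $|d(\bm p^*)|\le|d(\hat{\bm p})|+\bar\mu\|\bm p^*-\hat{\bm p}\|_\infty$. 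The first term is at most $\bar\mu S_L$. The second term is absorbed on the left, using $q\le\kappa$ once more.

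The same summation-by-parts stability bound then yields $\|\bm p^*-\hat{\bm p}\|_{1,h}\le C(\kappa)N_LS_Lh$. Adding the two steps gives \eqref{eq:pecletbound}. Parts (a) and (b) then follow by substituting $S_L\le C_0h$ and $S_L=O(1)$ respectively.
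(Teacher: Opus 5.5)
Your skeleton matches the paper's. You split through the unlimited solution $\hat{\bm p}$, take $\ell_1$ stability from the unit-norm core resolvent (your Neumann bound $\|(\mathcal I-\gamma A_2)^{-1}\|_1\le(1-\kappa)^{-1}$ is the paper's stability step), and write the truncation error in flux form.

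However, the ``main obstacle'' of your Step 1 does not exist. The missing power of $h$ comes from the first assumption, $\gamma\le\kappa\gamma_{\mathrm{pic}}=\kappa h/(2\bar\mu)$. Your own estimate $\|\bm\tau\|_{1,h}\lesssim\gamma M_S|\Omega|h$ is therefore already $\lesssim\kappa M_S|\Omega|h^2/\bar\mu$. The layer interfaces contribute $h\cdot\gamma\cdot\frac{2}{h}\,N_L\bar\mu S_L\le N_LS_Lh$. Both bounds carry the factor $(1-\kappa)^{-1}$. This is exactly how the paper closes the consistency estimate: crude two-row counting of the interface errors, then $2\gamma\bar\mu\le h$, with no summation by parts.

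The inequality you set out to prove, $\gamma\|(\mathcal I-\gamma A_2)^{-1}D_h^-/h\|_1\le C(\kappa)$, is false as written. The rescaling $(\gamma,\mu,D)\mapsto(\lambda\gamma,\mu/\lambda,D/\lambda)$ leaves $\gamma A_2$, $q$ and $\mathrm{Pe}_h$ unchanged and multiplies the left side by $\lambda$. With a factor $\bar\mu$ inserted, it follows in one line from $\|D_h^-\|_1\le2$ and $2\gamma\bar\mu\le\kappa h$. The resulting $1/\bar\mu$ in the smooth term is the same one the paper's consistency bound carries. The potential device would not help either. Writing the diffusive flux-error divergence as $A_1\bm\phi$ requires inverting $A_1$ on mean-zero data, and $\|\bm\phi\|_1$ then in general carries factors $1/D$ or $1/\bar\mu$. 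That is precisely the non-uniformity you want to avoid.

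The genuine gap is in Step 2. You evaluate the clamp residual at $\bm p^{*}$, but the hypotheses control the limiter only at $\hat{\bm p}$. The second assumption places the active set of $\hat{\bm p}$ in $\mathcal A$, and the third bounds $d(\hat{\bm p})$ there. \cref{prop:schemeBp}(iv) gives slackness at $\bm p^{*}$ only under a resolution condition on $\bm p^{*}$ itself. That condition is not among your hypotheses and is not tied to $\mathcal A$. ``Stability of the active set'' is also not available. The cap margins $h b_j/(2\gamma)-|d_{i+1/2}(\hat{\bm p})|$ can be arbitrarily small just outside the active set of $\hat{\bm p}$, so an interface slack at $\hat{\bm p}$ may bind at $\bm p^{*}$ outside $\mathcal A$.

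The absorption does not close either. With $\bm e=\bm p^{*}-\hat{\bm p}$, your $\ell_\infty$ bound summed over up to $N_L$ interfaces gives $\|\bm e\|_1\le\frac{\kappa}{1-\kappa}N_L\bigl(S_L+\|\bm e\|_\infty\bigr)$, which cannot be absorbed uniformly in $N_L$. Even correct $\ell_1$ bookkeeping does not save it. Using that $d\mapsto\Lambda(d)-d$ is $1$-Lipschitz removes the need for active-set stability. But routed through the Neumann bound it yields $\|\bm e\|_1\le\frac{\kappa}{1-\kappa}\bigl(N_LS_L+\|\bm e\|_1\bigr)$, which can be absorbed only for $\kappa<1/2$.

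The missing idea is to compare at $\hat{\bm p}$ through the sweep map rather than through $(\mathcal I-\gamma A_2)^{-1}$. Since $\Phi$ is a $q$-contraction with fixed point $\bm p^{*}$ and $\hat{\bm p}=\Phi_\infty(\hat{\bm p})$, one has $\|\bm p^{*}-\hat{\bm p}\|_1\le(1-q)^{-1}\|\Phi(\hat{\bm p})-\hat{\bm p}\|_1$. Here $\Phi(\hat{\bm p})-\hat{\bm p}=\gamma M^{-1}\bigl(D_\Lambda(\hat{\bm p})-C\hat{\bm p}\bigr)$ with $M=\mathcal I-\gamma A_1$. This is a flux difference of clamp residuals at $\hat{\bm p}$, which vanish off $\mathcal A$ and are bounded by $\bar\mu S_L$ on it. Then $2\gamma\bar\mu\le h$ gives $\|\bm p^{*}-\hat{\bm p}\|_{1,h}\le N_LS_Lh/(1-\kappa)$ for every $\kappa<1$, which is the paper's limiter-defect step.
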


\begin{proof}[Proof sketch]
The error splits into a consistency part, $\hat{\bm p}-\bm u$, and a limiter defect, $\bm p^{*}-\hat{\bm p}$. Stability costs nothing beyond the split: the resolvent of the core has $\ell_1$ norm one, so under $\gamma\le\kappa\gamma_{\mathrm{pic}}$ the error of the unlimited solve is bounded by $\gamma/(1-\kappa)$ times the truncation error. The truncation error is itself a flux difference. On smooth interfaces Taylor expansion of the upwind and centered numerical fluxes gives $O(h^2)$ flux errors, which sum to the first term of \eqref{eq:pecletbound}. On the $N_L$ layer interfaces the flux error is bounded crudely by the fluxes themselves, hence by $\bar\mu S_L$, which produces the second term. The limiter defect is controlled by the fixed-point perturbation bound for the contraction: it is proportional to the clamp residuals at $\hat{\bm p}$, which vanish off the active set and are bounded by $\bar\mu S_L$ on it, giving a contribution of the same size $N_LS_Lh$. All constants are independent of $D$ and $\mathrm{Pe}_h$ because the diffusion enters only through the core resolvent, whose norm is exactly one, and through $M_S$. The full proof is given in \cref{appPeclet}.
\end{proof}

Case (b) should be read as the expected price of monotone front resolution rather than a defect of the construction. A sub-cell front cannot be placed on the grid to better than one cell without tracking its position at sub-cell accuracy. Misplacing an order-one jump by one cell already produces an $O(h)$ error in $L_1$. The clamp confines the additional smearing to a few cells around the front. Godunov's theorem closes the linear escape route. A positivity-preserving linear scheme is first-order globally, so no linear discretization can improve on a loss that is here confined to the layer interfaces. For nonlinear positivity-preserving schemes we are not aware of a lower-bound theorem at this generality. We therefore report the $O(h)$ layer term as an observed floor rather than a proved one.

The bound \eqref{eq:pecletbound} concerns a single resolvent solve. Over many time steps a monotone scheme smears a front progressively. In the pure-advection limit the $L_1$ error of the monotone core degrades to the rate $O(h^{1/2})$, which is known to be sharp \cite{Kuznetsov1976,Sabac1997}. The limited scheme does not follow it down. Over two thousand steps at a front the observed rate of FCDF-B stays near one, while the monotone core settles near two thirds and drifts toward the $h^{1/2}$ floor, with an absolute error roughly eight times larger at the longest horizon. \Cref{ssec:numLongtime} reports the measurement.

\section{Second order in time} \label{sec:timeorder}

The Picard iteration \eqref{eq:picard1d} is an inner solver, not a time integrator: the index $k$ counts sweeps within a single implicit step, and the converged fixed point is the backward Euler solution of \eqref{eq:implicitstep} with $\theta=1$. The FCDF schemes of \cref{sec:limited} are therefore second-order in space but only first-order in time, and under the natural refinement $\Delta t\sim h$ the temporal error dominates. The trapezoidal rule would restore $O(\Delta t^2)$ but is unavailable in this framework: its explicit half $(\mathcal I+\tfrac{\Delta t}{2}A_2)$ has a diagonal that becomes negative under a parabolic-scale step, and flux limiting cannot repair a diagonal.

Two constructions restore second order in time without ever forming an explicit half, and they divide the step-size axis between them. At large steps the subdiagonal Pad\'e$(0,2)$ map of \cite{ItkinKazbek2026ADI}, $r_{02}(\Delta t\,A_2)=(\mathcal I-\Delta t\,A_2+\tfrac{\Delta t^2}{2}A_2^2)^{-1}$, is second-order, L-stable, fully implicit, and entrywise nonnegative as a linear map for all $\Delta t$ above the explicit threshold $\gamma_r$ established there, so inside that window it is the natural time stepper of the large-step branch of \cref{cor:coverage}. Below the window the limited machinery of \cref{sec:limited} does not extend to it: the denominator $1-z+z^2/2$ has the complex conjugate roots $1\pm\mathrm i$, and no factorization into real backward Euler factors exists, since real poles would require $a+b=1$ and $ab=\tfrac12$, so the M-matrix sign structure on which the analysis of \cref{prop:schemeBp} rests is unavailable for the quadratic solve. The small-step regime therefore needs its own second-order stepper built from limited backward Euler solves, and we construct it now.

\subsection{The defect-corrected step} \label{ssec:dcstep}

Freeze the coefficients at the midpoint $t_n+\Delta t/2$, so that $A_2$ is constant within the step. The scheme, which we call FCDF-DC, consists of a limited backward Euler predictor followed by one corrector solve of the same form,
\begin{align}
\bigl(\mathcal I-\Delta t\,A_2\bigr)\,\bm Y_0 \;&=\; \bm p^{\,n}, \label{eq:predictor}\\
\bigl(\mathcal I-\Delta t\,A_2\bigr)\,\bm p^{\,n+1} \;&=\; \bm p^{\,n} \;-\; \frac{\Delta t^2}{2}\,A_2^2\,\bm Y_0 , \label{eq:corrector}
\end{align}
both stages realized by the flux-limited iteration described below. The correction term costs one banded multiply, $\bm w:=A_2\bm Y_0$, followed by the flux assembly of $A_2\bm w$, so one time step costs two limited solves and one multiply.

The corrector \eqref{eq:corrector} is defect correction toward the trapezoidal rule, and the defect is available in closed form.
\begin{lemma}[Defect identity]\label{lem:defect}
If $\bm Y_0$ solves \eqref{eq:predictor} exactly, then its residual in the trapezoidal equation is
\begin{equation}\label{eq:defectid}
\Bigl(\mathcal I-\tfrac{\Delta t}{2}A_2\Bigr)\bm Y_0-\Bigl(\mathcal I+\tfrac{\Delta t}{2}A_2\Bigr)\bm p^{\,n}
\;=\;\frac{\Delta t^2}{2}\,A_2^2\,\bm Y_0 .
\end{equation}
\end{lemma}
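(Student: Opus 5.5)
The identity is purely algebraic, so the plan is to use the predictor equation to eliminate $\bm p^{\,n}$ and then simplify. From \eqref{eq:predictor}, solved exactly, we have $\bm p^{\,n}=(\mathcal I-\Delta t\,A_2)\bm Y_0$. Substituting this into the left-hand side of \eqref{eq:defectid} turns it into a single polynomial in $A_2$ applied to $\bm Y_0$:
\[
\Bigl(\mathcal I-\tfrac{\Delta t}{2}A_2\Bigr)\bm Y_0-\Bigl(\mathcal I+\tfrac{\Delta t}{2}A_2\Bigr)\bigl(\mathcal I-\Delta t\,A_2\bigr)\bm Y_0 .
\]
This step is legitimate because all factors are polynomials in the same matrix $A_2$, which is frozen at the midpoint within the step. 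Hence they commute, and no ordering issue arises.

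The second step is to expand the product:
\[
\bigl(\mathcal I+\tfrac{\Delta t}{2}A_2\bigr)\bigl(\mathcal I-\Delta t A_2\bigr)=\mathcal I-\tfrac{\Delta t}{2}A_2-\tfrac{\Delta t^2}{2}A_2^2 .
\]
Subtracting this from $\mathcal I-\tfrac{\Delta t}{2}A_2$ cancels the identity and the linear terms. What remains is exactly $\tfrac{\Delta t^2}{2}A_2^2$ acting on $\bm Y_0$, which is \eqref{eq:defectid}.

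There is no genuine obstacle. The only point to state explicitly is the standing hypothesis that $\bm Y_0$ is the exact solution of the unlimited predictor \eqref{eq:predictor}, with the coefficients frozen so that $A_2$ is a fixed matrix. If the limiter were active in the predictor, a residual term $(\mathcal I-\Delta t A_2)\bm Y_0-\bm p^{\,n}$ would add to the right-hand side. We would note this in a remark, since it explains why the corrector \eqref{eq:corrector} is defect correction toward the trapezoidal rule only where the caps are slack.

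As a closing step we would point out that rearranging \eqref{eq:defectid} motivates \eqref{eq:corrector}. A trapezoidal solution $\bm p^{\,n+1}$ satisfies $(\mathcal I-\tfrac{\Delta t}{2}A_2)\bm p^{\,n+1}=(\mathcal I+\tfrac{\Delta t}{2}A_2)\bm p^{\,n}$. Replacing the trapezoidal operator by the backward Euler operator and compensating with the known defect yields the corrector, up to $O(\Delta t^3)$ terms. Those consistency estimates, however, belong to the subsequent analysis rather than to this lemma.
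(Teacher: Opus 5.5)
Your proof is correct and is essentially the paper's argument: both eliminate $\bm p^{\,n}$ using the exact predictor. The paper first regroups the left-hand side as $(\bm Y_0-\bm p^{\,n})-\tfrac{\Delta t}{2}A_2(\bm Y_0+\bm p^{\,n})$, while you expand $(\mathcal I+\tfrac{\Delta t}{2}A_2)(\mathcal I-\Delta t A_2)$ directly, and the commutation remark is not actually needed for that expansion. One small correction to your side remark: with an inexact predictor the extra term is $(\mathcal I+\tfrac{\Delta t}{2}A_2)\bigl[(\mathcal I-\Delta t A_2)\bm Y_0-\bm p^{\,n}\bigr]$, not the bare residual.
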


\begin{proof}
The left-hand side equals $(\bm Y_0-\bm p^{\,n})-\tfrac{\Delta t}{2}A_2(\bm Y_0+\bm p^{\,n})$. By \eqref{eq:predictor}, $\bm Y_0-\bm p^{\,n}=\Delta t\,A_2\bm Y_0$ and $\bm p^{\,n}=\bm Y_0-\Delta t\,A_2\bm Y_0$, so $\tfrac{\Delta t}{2}A_2(\bm Y_0+\bm p^{\,n})=\Delta t\,A_2\bm Y_0-\tfrac{\Delta t^2}{2}A_2^2\bm Y_0$, and the two displays combine to \eqref{eq:defectid}.
\end{proof}

The exact trapezoidal solution is $\bm p_{\mathrm{TR}}=\bm Y_0-(\mathcal I-\tfrac{\Delta t}{2}A_2)^{-1}\tfrac{\Delta t^2}{2}A_2^2\bm Y_0$ by \cref{lem:defect}. Replacing the half-step resolvent by the full-step one, which perturbs the $O(\Delta t^2)$ correction by $O(\Delta t^3)$, gives $\bm p^{\,n+1}=\bm Y_0-(\mathcal I-\Delta t A_2)^{-1}\tfrac{\Delta t^2}{2}A_2^2\bm Y_0$, and multiplying through by $(\mathcal I-\Delta t A_2)$ and using \eqref{eq:predictor} yields \eqref{eq:corrector}. The reuse of the full-step matrix means both stages share one factorization.

On a scalar eigenvalue $z=\Delta t\,\lambda$ the two stages compose to the stability function
\begin{equation}\label{eq:Rfunc}
R(z)\;=\;\frac{1-z-z^2/2}{(1-z)^2} \;=\; 1+z+\frac{z^2}{2}+0\cdot z^3+O(z^4),
\end{equation}
so the local error is $-\tfrac{z^3}{6}+O(z^4)$ relative to $e^z$ and the scheme
is second-order in time. On the imaginary axis $|R(\mathrm iy)|^2 = (1+2y^2+y^4/4)/(1+2y^2+y^4)\le1$, and the only pole $z=1$ lies in the
right half-plane, so $R$ is A-stable. At infinity $R(\infty)=-\tfrac12$: by the
$r(\infty)$ criterion of \cite{ItkinKazbek2026ADI}, the unlimited two-stage map
has only a bounded positivity window as a linear map, so FCDF-DC is not a
competitor to Pad\'e$(0,2)$ at large steps. In the small-step regime its
positivity comes from the budgets, not from the sign of the linear map, as we now
show.

\subsection{The limited realization} \label{ssec:dclimited}

Both stages are solves of the form $(\mathcal I-\Delta t\,A_2)\,\bm p=\bm p^{\,n}+\bm s$ with a source $\bm s$ in exact flux form, $\bm s=\bm 0$ for the predictor and $\bm s=-\tfrac{\Delta t^2}{2}A_2\bm w$, $\bm w=A_2\bm Y_0$, for the corrector. Since $A_2$ telescopes, $\bm s_i=-[G_{i+1/2}-G_{i-1/2}]/h$ with the fixed defect fluxes $G_{i+1/2}=\tfrac{\Delta t^2}{2}\hat J_{i+1/2}(\bm w)$, where $\hat J$ is the total numerical flux of \cref{sec:setting}, and $G$ vanishes at the boundary interfaces by the zero-flux closure, so $\bm1^\top\bm s=0$.

The limited iteration of a stage combines the iterated antidiffusive fluxes and the fixed defect fluxes into one total correction flux per interface and clamps the sum,
\begin{equation}\label{eq:combflux}
F_{i+1/2}\bigl(\bm p\bigr)\;=\;\Delta t\,d_{i+1/2}(\bm p)\;+\;G_{i+1/2},
\qquad
\widetilde\Lambda_{i+1/2}(\bm p)\;=\;\max\Bigl(-\tfrac{h\,p^{\,n}_{i+1}}{2},\,\min\bigl(F_{i+1/2}(\bm p),\,\tfrac{h\,p^{\,n}_{i}}{2}\bigr)\Bigr),
\end{equation}
and the sweep solves $(\mathcal I-\Delta t\,A_1)\,\bm p^{[k+1]}=\bm p^{\,n}-\bigl[\widetilde\Lambda_{i+1/2}(\bm p^{[k]})-\widetilde\Lambda_{i-1/2}(\bm p^{[k]})\bigr]/h$, with budgets drawn from the common nonnegative base $\bm p^{\,n}$ in both stages. Clamping the sum rather than each contribution separately is deliberate: the two flux families may partially cancel, and the combined clamp discards only what the joint budget cannot afford. The predictor is the special case $G\equiv0$, which recovers \eqref{eq:picardlim} with the recomputed limiter.

\begin{proposition}[FCDF-DC]\label{prop:dc}
Let $\bm p^{\,n}\ge0$, let $A_1$ be the M-matrix core of \eqref{eq:coresplit}, and let both stages of \eqref{eq:predictor}--\eqref{eq:corrector} be realized by the limited iteration with the combined clamp \eqref{eq:combflux}. Then, for every $\Delta t>0$:
\begin{enumerate}
\item[\textup{(i)}] (Unconditional positivity) Every iterate of both stages is entrywise nonnegative, and so are the stage fixed points $\bm Y_0^{*}$ and $\bm p^{\,n+1}$.
\item[\textup{(ii)}] (Exact conservation and $\ell_1$ stability) $\bm1^\top\bm p^{\,n+1}=\bm1^\top\bm p^{\,n}$ for every limiter value in both stages, hence $\norm{\bm p^{\,n+1}}_1=\norm{\bm p^{\,n}}_1$ on nonnegative data and the time stepping is unconditionally $\ell_1$-stable on the cone.
\item[\textup{(iii)}] (Contraction) Both sweep maps are Lipschitz in $\ell_1$ with the same constant $q=2\Delta t\,\bar\mu/h$ as in \cref{prop:schemeBp}, since the fixed fluxes $G$ cancel in differences and the clamp is nonexpansive. For $\Delta t<h/(2\bar\mu)$ each stage converges geometrically to its unique fixed point.
\item[\textup{(iv)}] (Conditional temporal order) On any region where the caps are slack at the fixed points of both stages, the computed step coincides with the linear scheme \eqref{eq:predictor}--\eqref{eq:corrector} row by row, whose stability function is \eqref{eq:Rfunc}, so the local temporal error there is $O(\Delta t^3)$ and the step is second-order in time and space. Where a cap binds, the step degrades locally toward the limited backward Euler solution, first-order in time, at exactly the nodes where the spatial limiter of \cref{prop:schemeBp} is also active.
\end{enumerate}
\end{proposition}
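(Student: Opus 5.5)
The plan is to treat each stage of FCDF-DC as an instance of the sweep map already analysed in \cref{prop:schemeBp}, with the single change that the clamped quantity is the combined flux $F_{i+1/2}$ of \eqref{eq:combflux} instead of $\gamma d_{i+1/2}$. The argument rests on three structural facts from \cref{sec:setting}. First, $(\mathcal I-\Delta t A_1)^{-1}\ge0$ with unit column sums for every $\Delta t>0$. Second, the clamp interval $[-hp^{\,n}_{i+1}/2,\,hp^{\,n}_i/2]$ is fixed, because it depends only on the base $\bm p^{\,n}\ge0$, and it contains zero. Third, the limited correction is a flux difference that vanishes at the boundary interfaces. With these, parts (i)--(iii) follow by running the proof of \cref{prop:schemeBp} verbatim. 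Part (iv) then follows from \cref{lem:defect} and the expansion \eqref{eq:Rfunc}.

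For (i), I will fix a node $i$ and bound the withdrawal through each of its two interfaces. At interface $i+1/2$ the term $-\widetilde\Lambda_{i+1/2}/h$ is at least $-p^{\,n}_i/2$. At interface $i-1/2$ the term $+\widetilde\Lambda_{i-1/2}/h$ is at least $-p^{\,n}_i/2$, by the lower cap with index shifted. Hence every sweep right-hand side satisfies $p^{\,n}_i-[\widetilde\Lambda_{i+1/2}-\widetilde\Lambda_{i-1/2}]/h\ge0$ for \emph{any} input iterate, and inverse-positivity of the core makes every iterate nonnegative. The fixed points are nonnegative as limits of nonnegative iterates when $q<1$. Outside the contraction regime, I will argue directly: any fixed point solves $\bm p=(\mathcal I-\Delta tA_1)^{-1}(\text{nonnegative rhs})$. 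The same applies to $\bm Y_0^*$ and $\bm p^{\,n+1}$, which are the outputs of the two stages.

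For (ii), I will multiply the sweep equation by $\bm 1^\top$. Using $\bm1^\top A_1=0$ and the telescoping of the clamped fluxes, with boundary clamps zero because $d$ and $G$ vanish there under the zero-flux closure, I get $\bm1^\top\bm p^{[k+1]}=\bm1^\top\bm p^{\,n}$ for every iterate and every limiter state. Combined with (i), this gives $\norm{\bm p^{\,n+1}}_1=\norm{\bm p^{\,n}}_1$.

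For (iii), I will subtract two sweeps. The fixed $G$ enters both arguments of the clamp identically, and the clamp onto a fixed interval is $1$-Lipschitz, so $|\widetilde\Lambda(\bm p)-\widetilde\Lambda(\bm q)|\le\Delta t\,|d(\bm p)-d(\bm q)|$ interfacewise. Together with $\|(\mathcal I-\Delta tA_1)^{-1}\|_1=1$, this reproduces exactly the constant $2\Delta t\bar\mu/h$ of \cref{prop:schemeBp}(iii), and Banach's theorem gives the geometric convergence.

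For (iv), on a region where $|F_{i+1/2}|$ lies strictly inside its cap at both fixed points, the clamp is the identity. The row of the fixed-point equation then reads $(\mathcal I-\Delta tA_1)\bm p=\bm p^{\,n}+\Delta t C\bm p+\bm s$, which is $(\mathcal I-\Delta tA_2)\bm p=\bm p^{\,n}+\bm s$, i.e.\ \eqref{eq:predictor} or \eqref{eq:corrector} row by row. If the caps are slack everywhere, the step is the linear composite map. Its stability function \eqref{eq:Rfunc} matches $e^z$ through $z^2$, which together with the frozen-midpoint coefficients gives local error $O(\Delta t^3)$.

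The main obstacle is making ``on a region'' rigorous. Row-by-row coincidence does not by itself transfer the global linear error bound to a subregion, because the limited rows couple to the rest of the grid through the resolvent. I would handle this as in \cref{prop:peclet}: write $\bm p^{\,n+1}$ minus the unlimited DC output as the resolvent applied to the clamp residuals, which are supported on active interfaces. This confines the deviation to those interfaces in the $\ell_1$ sense. For the degradation claim, I would show that when a cap binds, the clamped flux pulls the stage toward the $G$-free, $d$-limited iteration, i.e.\ the limited backward Euler step. The remark that this happens at the same nodes as the spatial limiter I would justify only heuristically: both caps use the budget $p^{\,n}$, and $G=O(\Delta t^2)$ is subordinate to $\Delta t\,d$ in resolved regions.
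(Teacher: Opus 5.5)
Your proposal follows the paper's proof essentially step for step. The paper also reads the combined clamp as a translation by the fixed $G$ followed by a nonexpansive clamp onto the budget interval of $\bm p^{\,n}$ (its \cref{lem:clampoffset}), then repeats for (i)--(iii) the withdrawal counting, the telescoping with $\bm1^\top(\mathcal I-\Delta t A_1)=\bm1^\top$, and the two-row Lipschitz count of \cref{prop:schemeBp}, and proves (iv) by the same row-by-row coincidence with \eqref{eq:predictor}--\eqref{eq:corrector}, arguing the ``same nodes'' claim no more rigorously than your sketch does.

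The paper does not attempt your $\ell_1$ perturbation refinement of (iv). If you carry it out, write $R=(\mathcal I-\Delta t A_2)^{-1}$ and define $\bm\rho_0,\bm\rho_1$ by $(\mathcal I-\Delta tA_2)\bm Y_0^*=\bm p^{\,n}+\bm\rho_0$ and $(\mathcal I-\Delta tA_2)\bm p^{\,n+1}=\bm p^{\,n}-\tfrac{\Delta t^2}{2}A_2^2\bm Y_0^*+\bm\rho_1$. The corrector's deviation from the unlimited output is then $R\bm\rho_1-\tfrac12(R-\mathcal I)^2\bm\rho_0$. The second term is inherited through $G$ from the limited predictor, and your sketch omits it. It is still bounded by $\tfrac12\bigl(1+\tfrac{1}{1-q}\bigr)^2\norm{\bm\rho_0}_1$, so your conclusion survives.
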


\begin{proof}[Proof sketch]
The key observation is that the combined clamp \eqref{eq:combflux} is the composition of a translation by the fixed defect flux $G$ with a clamp onto the same budget interval as before. The translation is an isometry, so the fixed fluxes cancel in differences and do not enlarge the clamp's range. Positivity and conservation then follow for both stages exactly as in \cref{prop:schemeBp}: the withdrawals per node are capped by half of the budget of the common base $\bm p^{\,n}$ regardless of $G$, and the clamped fluxes telescope for every limiter value, which also makes the step $\ell_1$-isometric on the cone. The contraction bound is unchanged for the same reason, since only the iterated antidiffusive part varies between sweeps. For the temporal order, wherever the caps are slack at the fixed points of both stages the clamps are inactive, the unclamped fluxes reassemble the full operators, and the two fixed-point systems coincide row by row with the linear scheme \eqref{eq:predictor}--\eqref{eq:corrector}, whose stability function \eqref{eq:Rfunc} matches $e^z$ through $z^2$. A binding cap replaces the combined flux by a budget value, which is the local fallback toward limited backward Euler, and the binding condition requires the flux to exceed half the local density scale, which confines it to the same set as the spatial limiter. The full proof is given in \cref{appDC}.
\end{proof}

The temporal error therefore inherits the taxonomy of \cref{sec:peclet}: second order uniformly on resolved regions, the monotone floor inside unresolved layers, and the two mechanisms activate at the same interfaces. Two practical remarks close the section. First, the defect fluxes are computed from the limited predictor $\bm Y_0^{*}$, and \cref{lem:defect} is exact only where the predictor caps were slack, so a binding predictor cap perturbs the corrector's defect at the same layer interfaces already counted in \cref{prop:dc}(iv), not elsewhere. Second, the division of labor across the step-size axis is now complete on the temporal side as well: FCDF-DC covers $\Delta t<\gamma_{\mathrm{pic}}$ and Pad\'e$(0,2)$ covers $\Delta t\ge\gamma_r$, so the temporal analogue of the overlap condition \eqref{eq:overlap} is condition \eqref{eq:overlap2} of \cref{cor:coverage}. \Cref{ssec:numOverlap} reports $\gamma_r$ alongside $\gamma_0$ and $\gamma_{\mathrm{pic}}$ on a mesh sequence and finds this temporal overlap empty, which is what sends the large-step second-order branch to the active-set solver.

\section{Coverage of all step sizes} \label{sec:coverage}

This section synthesizes the small-step nonlinear schemes with the large-step linear positivity windows to establish complete coverage of the step-size parameter $\gamma > 0$. We formulate explicit overlap conditions that guarantee positivity and conservation across all step sizes, and introduce an active-set solver for regimes where these conditions fail.

\subsection{Hand-over to the linear windows} \label{ssec:handover}

The companion paper \cite{ItkinKazbek2026ADI} establishes two linear positivity windows for the irreducible, conservative, eventually exponentially positive operator $A_2$. The backward Euler resolvent satisfies $(\mathcal I-\gamma A_2)^{-1}>0$ for all $\gamma\ge\gamma_0$. The Pad\'e$(0,2)$ map satisfies $r_{02}(\gamma A_2)=(\mathcal I-\gamma A_2+\tfrac{\gamma^2}{2}A_2^2)^{-1}>0$ for all $\gamma\ge\gamma_r$. Both maps lose positivity for small $\gamma$, both thresholds are computable global properties of the assembled matrix, and neither dominates the other a priori. The two linear branches differ in temporal accuracy: the resolvent is first-order in time, the Pad\'e map is second-order and L-stable.

The nonlinear schemes of \cref{sec:limited,sec:timeorder} occupy the complementary regime. FCDF-B and its defect-corrected extension FCDF-DC are unconditionally positive and exactly conservative for every step, and their Picard iterations are contractive for $\gamma<\gamma_{\mathrm{pic}}=h/(2\bar\mu)$, the purely convective bound of \cref{prop:schemeBp}(iii). FCDF-B realizes the backward Euler step, FCDF-DC the second-order two-stage step. The four branches therefore pair off by temporal order, and coverage of the step-size axis reduces to two computable comparisons.

\begin{corollary}[Coverage]\label{cor:coverage}
Let $\gamma_{\mathrm{pic}}=h/(2\bar\mu)$ and let $\gamma_0$, $\gamma_r$ be the linear thresholds above.
\begin{enumerate}
\item[\textup{(a)}] (Positivity and conservation) If
\begin{equation}\label{eq:overlap}
\gamma_{\mathrm{pic}} \;\ge\; \min(\gamma_0,\gamma_r),
\end{equation}
then for every $\gamma>0$ the directional step \eqref{eq:implicitstep} admits a realization that is entrywise nonnegative on nonnegative data, exactly mass-conservative, and second-order consistent in space wherever the density is resolved: FCDF-B for $\gamma\le\gamma_{\mathrm{pic}}$ and whichever linear map is positive for $\gamma\ge\min(\gamma_0,\gamma_r)$, with an explicit switchover anywhere in the overlap.
\item[\textup{(b)}] (Full second order) If, in addition,
\begin{equation}\label{eq:overlap2}
\gamma_{\mathrm{pic}} \;\ge\; \gamma_r,
\end{equation}
then the realization can be chosen second-order in time as well: FCDF-DC for $\gamma\le\gamma_{\mathrm{pic}}$ and the Pad\'e$(0,2)$ map for $\gamma\ge\gamma_r$, so the time stepping is positive, conservative, and second-order in time and space, wherever the density is resolved, for every step size.
\end{enumerate}
Both conditions are single computable numbers per mesh, exact for the OU benchmark by eigendecomposition, and \eqref{eq:overlap2} implies \eqref{eq:overlap}.
\end{corollary}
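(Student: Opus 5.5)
The corollary is a bookkeeping consequence of \cref{prop:schemeBp}, \cref{prop:dc} and the two linear windows of \cite{ItkinKazbek2026ADI}. The plan is to partition the step-size axis $(0,\infty)$ into the contraction regime $(0,\gamma_{\mathrm{pic}})$, or up to $\gamma_{\mathrm{pic}}$ after a limiting argument, and the linear regime $[\gamma^\ast,\infty)$, then check each claimed property branch by branch. For part (a) take $\gamma^\ast=\min(\gamma_0,\gamma_r)$; for part (b) take $\gamma^\ast=\gamma_r$. Condition \eqref{eq:overlap}, respectively \eqref{eq:overlap2}, says exactly that the two intervals cover $(0,\infty)$. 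Any switchover point $\gamma_s\in[\gamma^\ast,\gamma_{\mathrm{pic}}]$ defines a piecewise realization.

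\textbf{Nonlinear branch.} For $\gamma<\gamma_{\mathrm{pic}}$, \cref{prop:schemeBp}(iii) gives a unique fixed point $\bm p^*$ of the recomputed-limiter sweep. Part (i) gives $\bm p^*\ge0$, since it is a limit of nonnegative iterates and the cone is closed. Part (ii) gives $\bm1^\top\bm p^*=\bm1^\top\bm b$, since conservation passes to the limit. Part (iv) gives second-order spatial consistency at slack caps, that is, wherever the density is resolved. For (b) the same reasoning uses \cref{prop:dc}(i)--(iv) with $\Delta t=\gamma$, and part (iv) adds second order in time on resolved regions.

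The endpoint $\gamma=\gamma_{\mathrm{pic}}$ is the one point where $q=1$ and contraction is lost. I would avoid it rather than analyse it. When the inequality in \eqref{eq:overlap} is strict, choose $\gamma_s<\gamma_{\mathrm{pic}}$. In the equality case $\gamma_{\mathrm{pic}}=\gamma^\ast$, hand $\gamma_{\mathrm{pic}}$ itself to the linear map, which is positive there.

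\textbf{Linear branch.} For $\gamma\ge\min(\gamma_0,\gamma_r)$, pick $(\mathcal I-\gamma A_2)^{-1}$ if $\gamma\ge\gamma_0$, otherwise $r_{02}(\gamma A_2)$. Each is entrywise positive by \cite{ItkinKazbek2026ADI}, so it maps nonnegative data to nonnegative output. Conservation follows from $\bm1^\top A_2=0$. Indeed $\bm1^\top(\mathcal I-\gamma A_2)=\bm1^\top$, so $\bm1^\top(\mathcal I-\gamma A_2)^{-1}=\bm1^\top$. Likewise $\bm1^\top(\mathcal I-\gamma A_2+\tfrac{\gamma^2}{2}A_2^2)=\bm1^\top$, which gives the same identity for $r_{02}$. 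Spatial second order is inherited from $A_2$, where it holds without any limiting. Temporal order is first for the resolvent and second for Pad\'e$(0,2)$, since $1/(1-z+z^2/2)=1+z+z^2/2+O(z^3)$. This is why part (b) must use the Pad\'e window alone, and hence needs $\gamma_{\mathrm{pic}}\ge\gamma_r$.

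\textbf{Remaining claims.} The implication \eqref{eq:overlap2}$\Rightarrow$\eqref{eq:overlap} is immediate from $\gamma_r\ge\min(\gamma_0,\gamma_r)$. Computability follows because $\gamma_{\mathrm{pic}}$ is explicit, while $\gamma_0$ and $\gamma_r$ are the thresholds of \cite{ItkinKazbek2026ADI}, obtained for the OU benchmark from the eigendecomposition of $A_2$.

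The main obstacle is not analytic but definitional. It is making precise the phrase ``second order wherever the density is resolved'' uniformly across the switch. On the linear branch it holds globally. On the nonlinear branch it holds only on the slack set of \cref{prop:schemeBp}(iv) and \cref{prop:dc}(iv). I would therefore state the conclusion as the weaker of the two, namely the slack-set version, and note that the hand-over introduces no additional defect because each step uses exactly one of the two maps.
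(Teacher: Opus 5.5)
Your argument is correct and matches the paper's own justification, which is the bookkeeping paragraph before the corollary: it pairs the unconditional positivity and conservation of FCDF-B/FCDF-DC (contractive for $\gamma<\gamma_{\mathrm{pic}}$) with the linear windows above $\gamma_0$ and $\gamma_r$, and conservation of the linear maps follows from $\bm 1^\top A_2=0$. Your treatment of the endpoint $\gamma=\gamma_{\mathrm{pic}}$ is slightly more careful than the statement's ``FCDF-B for $\gamma\le\gamma_{\mathrm{pic}}$'': there $q=1$ and \cref{prop:schemeBp}(iii) no longer gives a contraction, so in the equality case you assign that point to the linear map.
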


The switchover deserves one remark. Inside an overlap interval both branches are admissible and the choice is free. The two solutions differ by the local truncation error, so switching within a run introduces no order-reducing transient. In practice the natural rule is to switch at a fixed fraction of $\gamma_{\mathrm{pic}}$, where the Picard iteration still converges in a few sweeps, and the numerical section reports iteration counts across the overlap to calibrate that fraction.

A second remark concerns what the two conditions deliver in practice. The threshold estimates of \cite{ItkinKazbek2026ADI} bound $\gamma_0$ and $\gamma_r$ by the ratio of the reduced resolvent norm to the smallest entry of the discrete stationary density. That entry lives in the deepest tail of the domain and is exponentially small when the domain is wide. The two conditions therefore behave very differently under refinement, and the overlap check of \cref{ssec:numOverlap} measures both on the OU benchmark. Condition \eqref{eq:overlap} holds on every mesh from $n=101$ upward, because $\gamma_0$ falls with refinement faster than $\gamma_{\mathrm{pic}}$ does. Condition \eqref{eq:overlap2} fails on every mesh we tested, because $\gamma_r$ is governed by the deep-tail stationary mass and grows without bound as the domain resolves that tail. On the configurations examined here, positivity and conservation at all step sizes are delivered by the hand-over of part (a), while full second order at large steps is delivered by the active-set solver of \cref{ssec:activeset} rather than by the Pad\'e window. We state part (b) as a conditional result because its hypothesis is a computable property of the assembled matrix and may hold on narrower domains. We do not claim that it holds generically.

Table~\ref{tab:branches} summarizes the branches, their guarantees, and their costs.

\begin{table}[!htb]
\centering
\scalebox{0.9}{
\begin{tabular}{lllll}
\hline
Branch & Realization & Positivity & Order in time & Regime \\
\hline
FCDF-B  & limited Picard, backward Euler & unconditional & 1 & $\gamma<\gamma_{\mathrm{pic}}$ \\
FCDF-DC & two limited stages \eqref{eq:predictor}--\eqref{eq:corrector} & unconditional & 2 where resolved & $\gamma<\gamma_{\mathrm{pic}}$ \\
Resolvent & one banded solve & linear, $\gamma\ge\gamma_0$ & 1 & linear window \\
Pad\'e$(0,2)$ & one banded solve (conjugate pair) & linear, $\gamma\ge\gamma_r$ & 2 & linear window \\
Active set & semismooth Newton, \cref{ssec:activeset} & automatic at the solution & 1 or 2, stage-wise & any $\gamma$ \\
\hline
\end{tabular}
}
\caption{The branches of \cref{cor:coverage}. Every realization conserves mass exactly under the conservative closure. Costs per step are $O(n)$ times the number of Picard sweeps, one for the linear branches, and $O(n)$ times the number of pattern updates for the active set.}
\label{tab:branches}
\end{table}

\subsection{An active-set solver for large steps} \label{ssec:activeset}

The measurements of \cref{ssec:numOverlap} make this subsection load-bearing rather than optional. Condition \eqref{eq:overlap2} fails on the meshes tested, so the second-order branch at large steps is delivered here and not by the Pad\'e window. The same construction removes the step restriction of the Picard iteration whenever \eqref{eq:overlap} fails, or whenever the FCDF branch is preferred beyond $\gamma_{\mathrm{pic}}$ for other reasons.

The fixed points of the sweep map of \cref{prop:schemeBp} are exactly the zeros of the residual
\begin{equation}\label{eq:residual}
\bm F(\bm p) \;:=\; \bigl(\mathcal I-\gamma A_1\bigr)\bm p \;-\; \gamma\,D_\Lambda(\bm p) \;-\; \bm b ,
\end{equation}
where $(D_\Lambda(\bm p))_i = -[\Lambda_{i+1/2}(\bm p)-\Lambda_{i-1/2}(\bm p)]/h$ is the divergence of the clamped fluxes. At each cell face $i+1/2$ (an interface) the flux $\Lambda_{i+1/2}$ is obtained by clamping (i.e., limiting via the median) the linear flux $d_{i+1/2}(\bm p)$ to the two caps (the lower and upper bounds from the limiter budget): it is the median of the lower cap, the upper cap, and $d_{i+1/2}(\bm p)$. Consequently, $\bm F$ is piecewise linear, globally Lipschitz, and semismooth. This formalizes the limited system exactly; the limiter is driven by the caps, not by the sign of the solution, and \eqref{eq:residual} replaces the informal description as a complementarity problem in $(\bm p,\theta)$.

The semismooth Newton method for $\bm F(\bm p)=\bm 0$ has a transparent structure. At the current iterate each interface $i+1/2$ is labeled by its clamp pattern: lower cap, free, or upper cap. On a free interface the clamping operation reduces to the identity, so the flux equals the linear flux $d_{i+1/2}(\bm p)$ and contributes the correction flux. On a clamped interface the flux is fixed to a cap and contributes nothing. For a given pattern $S$, the generalized Jacobian is therefore
\begin{equation}\label{eq:newtonmat}
V_S \;=\; \mathcal I-\gamma\bigl(A_1+C_S\bigr),
\end{equation}
where $C_S$ is the correction operator \eqref{eq:corrflux1} restricted to the free interfaces of $S$. The matrix $A_1+C_S$ interpolates between the monotone core (all interfaces clamped) and the full second-order operator $A_2$ (all interfaces free). For every pattern $V_S$ is banded with the bandwidth of $A_2$. Each Newton step solves one such banded system and updates the pattern. Because $\bm F$ is piecewise linear, the iteration terminates as soon as the pattern at the solution is identified; one further solve then lands exactly on the zero. Because $\bm F$ is piecewise linear, the iteration terminates as soon as the pattern at the solution is identified; one further solve then lands exactly on the zero. The iteration count is therefore the number of pattern updates rather than a rate of convergence, and every solve reuses the same banded machinery. \Cref{ssec:numActiveSet} measures that count. It does not grow with $\gamma$, and it does not grow with the mesh.

Three properties support this construction. First, below the convective bound all Newton matrices are uniformly nonsingular: $V_S=M(\mathcal I-\gamma M^{-1}C_S)$ and $\norm{\gamma M^{-1}C_S}_1\le2\gamma\bar\mu/h<1$ for every pattern $S$, by the unit norm of the core resolvent and the flux bound of \cref{prop:schemeBp}(iii). Second, inside the resolvent window the all-free matrix $V_{\mathrm{free}}=\mathcal I-\gamma A_2$ is inverse-positive by \cite{ItkinKazbek2026ADI}. Hence the algorithm's first move, solving the unlimited system and accepting a nonnegative result at zero pattern updates and full second order, is the generic outcome there. Third, any zero of $\bm F$ is a fixed point of the sweep map and therefore nonnegative by \cref{prop:schemeBp}(i). What remains outside these statements is the uniqueness of zeros for $\gamma$ between $\gamma_{\mathrm{pic}}$ and the linear window. The iteration is initialized from the unlimited solution, which is available from the first move at no extra cost, and \cref{ssec:numActiveSet} reports pattern-update counts across $\gamma$, across meshes, and on data designed to fragment the active set.

\begin{proposition}[Nonsingularity of mixed-pattern Newton matrices]\label{prop:nonsingular}
Let $M=\mathcal I-\gamma A_1$ be the monotone core resolvent. For $\gamma\le\gamma_{\mathrm{pic}}$, $M$ is nonsingular and satisfies $\norm{M^{-1}}_1\le 1$. For any pattern $S$, factor
\begin{equation}\label{eq:factor}
V_S = M\bigl(\mathcal I - \gamma M^{-1}C_S\bigr).
\end{equation}
Then the following hold:
\begin{enumerate}
    \item[(a)] For $\gamma\le\gamma_{\mathrm{pic}}$, all matrices $V_S$ are uniformly nonsingular.
    \item[(b)] For $\gamma\in(\gamma_{\mathrm{pic}},\gamma_r)$, any pattern $S$ whose clamped interfaces form a set in which each contiguous block is separated from the next by at least one free interface yields a nonsingular $V_S$.
    \item[(c)] For $\gamma\le\gamma_r$, the all-free matrix $V_{\mathrm{free}}=\mathcal I-\gamma A_2$ and the all-clamped matrix $M$ are nonsingular. Nonsingularity of $V_S$ for arbitrary mixed patterns when $\gamma>\gamma_{\mathrm{pic}}$ remains an open theoretical question; numerical experiments to date show no failure across the tested range of $\gamma$.
\end{enumerate}
\end{proposition}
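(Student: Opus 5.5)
The plan is to read everything off the factorization \eqref{eq:factor}, treating the preliminary claims about $M$ first. Since $A_1$ is an irreducible Metzler generator with $\bm1^\top A_1=0$, the matrix $M=\mathcal I-\gamma A_1$ is a column diagonally dominant M-matrix for every $\gamma>0$. Hence $M^{-1}\ge0$, and $\bm1^\top M=\bm1^\top$ gives $\bm1^\top M^{-1}=\bm1^\top$. A nonnegative matrix with unit column sums has induced $\ell_1$ norm exactly one, so $\norm{M^{-1}}_1=1$. This is the same identity used in \cref{prop:schemeBp}(iii).

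For part (a), the work is to bound $\norm{C_S}_1$ uniformly in $S$. Each free interface flux $d_{i+1/2}=\tfrac12[(\mu p)_i-(\mu p)_{i-1}]$ enters two rows with weights $\pm1/h$. Consequently column $j$ of $C_S$ collects at most the contributions of $p_j$ through the interfaces $j+1/2$ and $j+3/2$, each of absolute size at most $\mu_j/h$ (two half-weights per interface, two rows each). This gives $\norm{C_S}_1\le 2\bar\mu/h$, which is exactly the constant of \cref{prop:schemeBp}(iii); restricting to a subset of interfaces can only shrink column sums of absolute values. Then $\norm{\gamma M^{-1}C_S}_1\le q=2\gamma\bar\mu/h$. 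For $\gamma\le\kappa\gamma_{\mathrm{pic}}$ a Neumann series gives $\norm{V_S^{-1}}_1\le 1/(1-q)$ uniformly in $S$. At $\gamma=\gamma_{\mathrm{pic}}$ itself $q=1$, so I would argue strictness separately: the column bound is attained only if $\mu$ attains $\bar\mu$ at both relevant nodes and no cancellation occurs, and near-boundary first-order rows break equality. Alternatively, I would state the uniform bound for $\gamma\le\kappa\gamma_{\mathrm{pic}}$ and obtain mere nonsingularity at the endpoint from irreducibility.

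For part (c), the all-clamped matrix is $M$, which is nonsingular for every $\gamma$. For the all-free matrix, $\mathcal I-\gamma A_2$ is singular iff $1/\gamma$ is an eigenvalue of $A_2$. By conservation and the eventual-exponential-positivity structure from \cite{ItkinKazbek2026ADI}, the spectrum of $A_2$ consists of $0$ (the Perron eigenvalue) and eigenvalues with negative real part. This is the stability property underlying the positive windows. Hence no positive real $1/\gamma$ is an eigenvalue, and $V_{\mathrm{free}}$ is nonsingular for all $\gamma>0$, in particular for $\gamma\le\gamma_r$.

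Part (b) is the main obstacle. The plan is to exploit locality. Clamped interfaces decouple the correction, so $C_S$ splits into independent correction blocks over maximal runs of free interfaces. The separation hypothesis is meant to ensure that each run coincides locally with a piece of $A_2$. I would write $A_1+C_S=A_2-C_{S^c}$. Under the hypothesis, $C_{S^c}$ consists of isolated rank-one flux dipoles $e_{\pm}\,d_{i+1/2}^\top$, one per clamped block. I would then try a Sherman--Morrison/Woodbury argument around $V_{\mathrm{free}}$, which is nonsingular by (c). Invertibility then reduces to nonsingularity of a small capacitance matrix $\mathcal I+\gamma\,D^\top V_{\mathrm{free}}^{-1}E$, where $D$ and $E$ are the dipole factors. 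The capacitance matrix I would expect to control via the sign structure of $V_{\mathrm{free}}^{-1}$ between adjacent nodes; its near-diagonal dominance follows from the decay of the banded inverse. Failing that, the fallback is to show that $A_1+C_S$ is itself conservative and inherits eventual positivity blockwise, so that its spectrum also avoids the positive real axis as in (c). This spectral route is the one I would try first, since it mirrors (c) and avoids explicit bounds on $V_{\mathrm{free}}^{-1}$.
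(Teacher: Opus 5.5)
\textbf{The gap is in part (b).} What you offer there is a menu of strategies, and its concrete steps do not hold.

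First, the Woodbury reduction assumes $C_{S^c}$ is one rank-one dipole per clamped block. In fact every clamped interface $i+\tfrac12$ contributes its own term $-\tfrac1h(e_i-e_{i+1})\,\nabla d_{i+1/2}^\top$. Nothing in the hypothesis bounds block lengths, so a block of $m$ consecutive clamped interfaces has rank $m$, and the capacitance matrix is as large as the clamped set.

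Second, the claimed diagonal dominance of that matrix is unsupported. Its off-identity entries are $\gamma/(2h)$ times $\mu$-weighted mixed second differences of entries of $V_{\mathrm{free}}^{-1}$, and the prefactor is not small anywhere in $\gamma>\gamma_{\mathrm{pic}}$. Below $\gamma_0$ the inverse $V_{\mathrm{free}}^{-1}$ has entries of both signs. Decay of a banded inverse therefore yields no quantitative bound.

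Third, the spectral fallback is not a mechanism. Eventual exponential positivity is a global Perron-type property and is not inherited blockwise. The fact you would need, that the conservative but non-Metzler matrix $A_1+C_S$ has no eigenvalue at $1/\gamma$, is (b) itself.

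The column count you did for $\norm{C_S}_1$ shows where the difficulty sits. In column $j$ of $A_1+C_S$ the off-diagonal absolute sum exceeds the modulus of the diagonal by $\theta_{j+3/2}\mu_j/h$, where $\theta_{j+3/2}=1$ if interface $j+\tfrac32$ is free. So column diagonal dominance certifies that every $V_S$ is nonsingular when $\gamma<h/\bar\mu=2\gamma_{\mathrm{pic}}$, which improves (a) by a factor of two. Beyond that, the Gershgorin discs reach $\mathrm{Re}\,z=\bar\mu/h$ and give nothing.

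\textbf{Following the paper would not close this gap.} Its proof of (b) orders the unknowns into alternating free and clamped blocks and asserts that $V_S$ is then block diagonal. It then applies a Neumann series on each free block and the M-matrix property of principal submatrices of $M$ on each clamped block. This fails for two reasons. Clamping removes only the correction flux at an interface, while $A_1$ still couples $p_i$ and $p_{i+1}$ across every interface (its sub- and superdiagonals are positive, which is what makes it irreducible). Hence $V_S$ is irreducible and no symmetric permutation makes it block diagonal. In addition, the only available bound for a local Neumann series is still $2\gamma\bar\mu/h$, which exceeds one on a segment of any length once $\gamma>\gamma_{\mathrm{pic}}$.

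The separation hypothesis is also vacuous. Maximal runs of clamped interfaces are always separated by a free interface, so (b) as stated asserts, on $(\gamma_{\mathrm{pic}},\gamma_r)$, exactly the arbitrary-mixed-pattern nonsingularity that (c) calls open.

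\textbf{What can be defended is (a), its extension to $\gamma<2\gamma_{\mathrm{pic}}$, and (c), and there your proposal is sound.}

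Part (a) is the paper's argument. At the endpoint $\gamma=\gamma_{\mathrm{pic}}$ the paper claims a strict inequality that fails there, and your boundary-row heuristic is vague. A cleaner fix: every nonzero column $c$ of $C_S$ sums to zero, hence has entries of both signs, and $M^{-1}$ is strictly positive. Therefore $\norm{M^{-1}c}_1<\norm{c}_1$, and $\norm{\gamma M^{-1}C_S}_1<1$ at the endpoint.

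Part (c) takes a different and better route than the paper. The paper cites inverse-positivity of $V_{\mathrm{free}}$, a property available only for $\gamma\ge\gamma_0$, not on $\gamma\le\gamma_r$. Your spectral argument covers every $\gamma>0$. Its key step can be made explicit in one line: for $t$ above the eventual-positivity threshold, $e^{tA_2}\ge0$ has unit column sums, so $\norm{e^{tA_2}}_1=1$. Hence the spectral abscissa of $A_2$ is nonpositive, and $1/\gamma$ is never an eigenvalue.
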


\begin{proof}
Part (a) follows from the flux bound of \Cref{prop:schemeBp}(iii): $\norm{\gamma M^{-1}C_S}_1\le 2\gamma\bar\mu/h<1$ uniformly in $S$ for $\gamma\le\gamma_{\mathrm{pic}}$. Hence $\mathcal I-\gamma M^{-1}C_S$ is a small perturbation of the identity and nonsingular by Neumann series, and $V_S$ inherits nonsingularity from $M$.

Part (c) is immediate: $V_{\mathrm{free}}$ is inverse-positive inside the resolvent window ($\gamma\le\gamma_r$) by \cite{ItkinKazbek2026ADI}, and $M$ is nonsingular by construction for $\gamma\le\gamma_{\mathrm{pic}}$, hence also for $\gamma\le\gamma_r$ when $\gamma_{\mathrm{pic}}<\gamma_r$.

Part (b) requires a more detailed argument, given as \Cref{lem:separated} below.
\end{proof}

\begin{lemma}[Separated clamped interfaces]\label{lem:separated}
Let $S$ be a pattern whose set of clamped interfaces can be partitioned into contiguous blocks $B_1,\dots,B_k$, and assume that any two distinct blocks $B_i$, $B_j$ are separated by at least one free interface. For $\gamma\in(\gamma_{\mathrm{pic}},\gamma_r)$, the matrix $V_S$ is nonsingular.
\end{lemma}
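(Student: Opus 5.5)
The plan is to exploit the banded, locally coupled structure of $V_S=\mathcal I-\gamma(A_1+C_S)$ and reduce its nonsingularity to a statement about the all-free matrix and local blocks. Deleting the correction flux at interface $i+1/2$ changes $A_2$ only through the flux $d_{i+1/2}(\bm p)=\tfrac12[(\mu p)_i-(\mu p)_{i-1}]$. Hence
\[
A_1+C_S \;=\; A_2 - \sum_{i+1/2\ \text{clamped}} \frac{1}{h}\,\bigl(\bm e_{i+1}-\bm e_{i}\bigr)\,\tfrac12\bigl(\mu_i\bm e_i-\mu_{i-1}\bm e_{i-1}\bigr)^{\!\top}
\]
(with the sign fixed by \eqref{eq:corrflux1}). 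The perturbation is therefore a sum of rank-one terms $\bm u_m\bm v_m^\top$, one per clamped interface. I would write
\[
V_S \;=\; V_{\mathrm{free}} + \gamma\,U W^\top,
\]
with $U$ and $W$ having one column per clamped interface. By \cref{prop:nonsingular}(c), $V_{\mathrm{free}}$ is invertible for $\gamma\le\gamma_r$, and in fact inverse-positive in the resolvent window. The Sherman--Morrison--Woodbury formula then reduces nonsingularity of $V_S$ to nonsingularity of the small capacitance matrix
\[
K_S \;=\; \mathcal I_k + \gamma\, W^\top V_{\mathrm{free}}^{-1} U .
\]

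Next I would compute the structure of $K_S$. Each vector $\bm u_m$ is a divergence of a unit interface flux, so $\bm 1^\top \bm u_m=0$. Each $\bm v_m$ is supported on the two upwind nodes of the interface. The entry $(K_S)_{mm'}$ thus measures the response at the support of $\bm v_m$ to a dipole source at interface $m'$. The separation hypothesis is used here: ordering the clamped interfaces left to right, the free interface between consecutive blocks keeps the supports of $\bm v$ from different blocks disjoint from the dipole supports of neighbouring blocks. I would then aim to show that $K_S$ is block diagonally dominant in the $\ell_1$ (column) sense. The diagonal blocks correspond to contiguous clamped runs, and each of these can be treated as the all-clamped problem restricted locally; that problem is nonsingular because $M$ is nonsingular with $\|M^{-1}\|_1=1$ for every $\gamma$, by inverse-positivity and conservation. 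The off-diagonal blocks would be controlled using positivity of $V_{\mathrm{free}}^{-1}$ together with conservation, $\bm 1^\top V_{\mathrm{free}}^{-1}=\bm 1^\top$. Conservation gives unit column sums, and the dipole structure gives cancellations.

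An alternative route, which I would try in parallel, is a direct block-elimination (Schur complement) argument on the banded matrix. One partitions the nodes into clamped-block regions and free regions. On a clamped block the rows are those of the M-matrix core, so the diagonal block is nonsingular for every $\gamma$. On the free separators, the rows are those of $A_2$. The Schur complement onto the separator nodes is then a small matrix whose nonsingularity should follow from the inverse-positivity of the free resolvent restricted to the separators, valid since $\gamma<\gamma_r$ lies in the window of \cite{ItkinKazbek2026ADI}.

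The main obstacle is the off-diagonal coupling bound. Because $\gamma>\gamma_{\mathrm{pic}}$, no Neumann-series smallness of $\gamma M^{-1}C_S$ is available, and the decay of $V_{\mathrm{free}}^{-1}$ between distinct blocks must be quantified. A single free interface gives only one cell of separation, so the domination must come from sign structure (inverse-positivity plus zero-sum dipoles) rather than from geometric decay. I would first try to show that $W^\top V_{\mathrm{free}}^{-1}U$ has a sign pattern making $K_S$ a nonsingular M-matrix, for instance a Z-matrix with a positive vector $\bm x$ such that $K_S\bm x>0$, taking $\bm x$ built from the stationary density. If that fails, I would settle for the weaker claim that $\det K_S\neq0$, proved by induction on the number of blocks via the tridiagonal-in-blocks structure that the separation assumption induces.
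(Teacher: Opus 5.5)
The Woodbury reduction is a valid reformulation. $V_{\mathrm{free}}$ is invertible for every $\gamma>0$, since a conservative, eventually exponentially positive $A_2$ has spectral abscissa $0$. But the reduction only moves the problem into $K_S$, and you never show $\det K_S\neq0$: the step you call the main obstacle is left as a list of strategies to try.

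The tools you intend for that step are also unavailable on the stated interval.
\begin{itemize}
\item \emph{Inverse-positivity of $V_{\mathrm{free}}$.} It holds on the resolvent window $\gamma\ge\gamma_0$. Both linear windows have the form $[\gamma_\ast,\infty)$, so ``$\gamma<\gamma_r$ lies in the window'' is a misreading (the same slip appears in the proof of \cref{prop:nonsingular}(c)). The interval $(\gamma_{\mathrm{pic}},\min(\gamma_0,\gamma_r))$ is nonempty exactly when \eqref{eq:overlap} fails. That happens on the front problem for every mesh, and on the OU mesh $n=51$, where $\gamma_{\mathrm{pic}}=0.02<\gamma_0\approx0.043<\gamma_r\approx1.36$. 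On that interval, neither the off-diagonal control by ``positivity of $V_{\mathrm{free}}^{-1}$ plus conservation'' nor the Schur-complement route has a foundation.
\item \emph{The Schur-complement route.} It fails independently as well: the Schur complement of $V_S$ onto the free nodes is not a restriction of $V_{\mathrm{free}}^{-1}$.
\item \emph{The diagonal blocks of $K_S$.} They are not ``the all-clamped problem restricted locally''. By the matrix determinant lemma, $\det(\mathcal I+\gamma W_B^\top V_{\mathrm{free}}^{-1}U_B)=\det V_{S_B}/\det V_{\mathrm{free}}$, where $S_B$ clamps only the block $B$. That is again a mixed pattern, and nonsingularity of $M$ says nothing about it.
\item \emph{The induction over blocks.} $V_{\mathrm{free}}^{-1}$ is in general a full matrix, so $K_S$ is full, and there is no block-tridiagonal structure to run the induction on.
\end{itemize}

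More fundamentally, the separation hypothesis cannot be what makes the lemma work, because it excludes nothing. Every clamped set is the disjoint union of its maximal runs, distinct maximal runs are automatically separated by a free interface, and $V_S$ depends only on which interfaces are free. The lemma therefore asserts, on $(\gamma_{\mathrm{pic}},\gamma_r)$, nonsingularity of $V_S$ for every mixed pattern. That is the very statement \cref{prop:nonsingular}(c) calls open. Equivalently, no $A_1+C_S$ may have an eigenvalue in $(1/\gamma_r,1/\gamma_{\mathrm{pic}})$.

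Proving this needs a global spectral argument, not locality across one-cell gaps. For instance, for $\mu>0$ the columns of $A_1+C_S$ sum to zero. Its only negative off-diagonal entries are the second-subdiagonal entries $-\mu_j/(2h)$ of free interfaces. Column Gershgorin therefore confines the spectrum to $\{\mathrm{Re}\,z\le\bar\mu/h\}$ and gives nonsingularity for every pattern when $\gamma<2\gamma_{\mathrm{pic}}$. Neither this bound nor anything in your plan reaches $\gamma_r$.

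The paper's own argument does not close the gap either. It claims a symmetric permutation makes $V_S$ block diagonal. That is impossible, because the subdiagonal of $V_S$ has no zero entries: $A_1$ contributes $\beta'_i>0$ and $C_S$ only adds to it. It then applies the Neumann bound of part (a) blockwise, which requires $2\gamma\bar\mu/h<1$. That is precisely the smallness you correctly recognized is lost for $\gamma>\gamma_{\mathrm{pic}}$.
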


\begin{proof}
See \cref{app:separated}
\end{proof}

The construction extends to the temporal branch without modification. Both stages of FCDF-DC are solves of the same form: a banded system with a nonnegative base and flux-form sources. Hence the residual \eqref{eq:residual} with the combined clamp of \eqref{eq:combflux} covers both stages, and the active-set solver applies stage-wise. The second-order-in-time branch of \cref{cor:coverage}(b) is thereby available beyond $\gamma_{\mathrm{pic}}$ as well.

Two gaps are closed by this solver, and they are of different kinds. The first is the temporal gap of \eqref{eq:overlap2}, which \cref{ssec:numOverlap} finds open on every mesh of the OU benchmark. The second is a gap in \eqref{eq:overlap} itself, which the smooth benchmark does not exhibit but the advection-dominated one does. On that problem $\gamma_0$ sits about four times above $\gamma_{\mathrm{pic}}$ on every mesh, so there is an interval of step sizes in which the Picard iteration no longer contracts and the resolvent is not yet positive, and the interval does not close under refinement. \Cref{ssec:numActiveSet} reports that a step in the middle of it costs one pattern update.

\section{Numerical experiments} \label{sec:numerics}

All experiments use the one-dimensional schemes of this paper on a uniform grid with the conservative zero-flux closure of \cref{sec:setting}. Five schemes are compared. FCDF-B is the flux-limited iteration of \cref{prop:schemeBp} with the limiter recomputed at every sweep. FCDF-DC is the defect-corrected step of \cref{prop:dc}. The unlimited linear scheme is backward Euler on the full second-order operator $A_2$, included to show where positivity fails. The monotone core is backward Euler on $A_1$, the first-order positive floor. CC is the exponentially fitted Chang--Cooper scheme, assembled interface by interface so that it remains valid for the variable and sign-changing drift of the Ornstein--Uhlenbeck benchmark.

Errors are reported in the discrete $L_1$ norm $\norm{\bm v}_{1,h}=h\sum_i|v_i|$ against the analytic solution wherever one exists. Observed orders are the successive-halving values, which expose pre-asymptotic behaviour that a single least-squares slope conceals. Mass is monitored through the discrete invariant $\bm1^\top\bm p$, which the conservative closure preserves exactly, and positivity through the most negative nodal value.

One methodological point governs every convergence table below. A study at fixed $\Delta t$ measures the spatial order only while the temporal error stays below the spatial one. Every scheme here that is first order in time, which is FCDF-B, the unlimited scheme, the monotone core and CC, therefore shows its measured spatial order fall away on the finest meshes, and that fall is a property of the measurement rather than of the stencil. FCDF-DC, being second order in time, is free of this effect. Where a statement about the stencil alone is needed, we compare the semi-discrete solution $e^{TA}\bm p^{\,0}$ with the analytic density, which removes the time discretization entirely.

\subsection{The overlap check} \label{ssec:numOverlap}

The first experiment decides the two conditions of \cref{cor:coverage}. The benchmark is the Ornstein--Uhlenbeck process $X_t$, governed by the stochastic differential equation
\begin{equation}
dx_t = -\alpha x_t \, dt + \sigma \, dW_t,
\end{equation}
where $W_t$ is a standard Brownian motion and the parameters are set to $\alpha=\sigma=1$. The domain spans six stationary standard deviations on either side of the origin. This ensures that the drift term, $\mu(x)=-\alpha x$, changes sign at the centre of the domain.

For each mesh the operator $A_2$ is assembled and the three thresholds are computed. The first threshold, $\gamma_{\mathrm{pic}}=h/(2\bar\mu)$, is calculated in closed form. The remaining two, $\gamma_0$ and $\gamma_r$, are determined by bisection on the most negative entry of the corresponding resolvent map.

\begin{table}[!htb]
\centering
\begin{tabular}{rcccccc}
\hline
$n$ & $h$ & $\gamma_{\mathrm{pic}}$ & $\gamma_0$ & $\gamma_r$ & \eqref{eq:overlap} & \eqref{eq:overlap2} \\
\hline
$51$ & $1.70\times10^{-1}$ & $2.00\times10^{-2}$ & $4.28\times10^{-2}$ & $1.36\times10^{0}$ & no & no \\
$101$ & $8.49\times10^{-2}$ & $10^{-2}$ & $9.84\times10^{-3}$ & $1.39\times10^{0}$ & yes & no \\
$201$ & $4.24\times10^{-2}$ & $5.00\times10^{-3}$ & $1.82\times10^{-3}$ & $\infty$ & yes & no \\
$401$ & $2.12\times10^{-2}$ & $2.50\times10^{-3}$ & $2.69\times10^{-4}$ & $3.80\times10^{5}$ & yes & no \\
\hline
\end{tabular}
\caption{Coverage thresholds on the OU benchmark. The entry $\infty$ means that no positive window was found below the search bound $10^{6}$. The decay observed in the first-order-in-time schemes (FCDF-B, unlimited, CC) on fine meshes reflects the temporal error floor, not a degradation of their spatial stencils."}
\label{tab:overlap}
\end{table}

The two conditions behave in opposite ways under refinement. The resolvent threshold $\gamma_0$ falls faster than $\gamma_{\mathrm{pic}}$, so condition \eqref{eq:overlap} fails only on the coarsest mesh and holds from $n=101$ onward, with the margin widening as the mesh refines. The Pad\'e threshold $\gamma_r$ does the reverse. It is already two orders of magnitude above $\gamma_{\mathrm{pic}}$ on the coarsest mesh and grows without bound, so condition \eqref{eq:overlap2} fails on every mesh tested. The mechanism is the one anticipated in \cref{ssec:handover}. The threshold estimates are governed by the smallest entry of the discrete stationary density, that entry sits in the deepest tail of a domain six standard deviations wide, and refining the mesh resolves the tail further and drives the entry towards zero.

The implications for the proposed method are straightforward. For this benchmark, positivity and exact conservation are guaranteed at every step size by \cref{cor:coverage}(a). However, full second-order accuracy at large step sizes is not achieved via the Pad'e window alone. Rather, it relies on the active-set solver detailed in \cref{ssec:activeset}.

\Cref{fig:thresholds} compares the three key theoretical thresholds, the Picard contraction threshold~$\gamma_{\mathrm{pic}}$, the backward Euler resolvent positivity threshold~$\gamma_0$, and the Pad\'{e}(0,2) positivity threshold~$\gamma_r$, as functions of the mesh size~$h$ on the Ornstein--Uhlenbeck benchmark, determining whether the coverage conditions of Corollary~\ref{cor:coverage} hold.

\begin{figure}[!htb]
\centering
\includegraphics[width=0.6\textwidth]{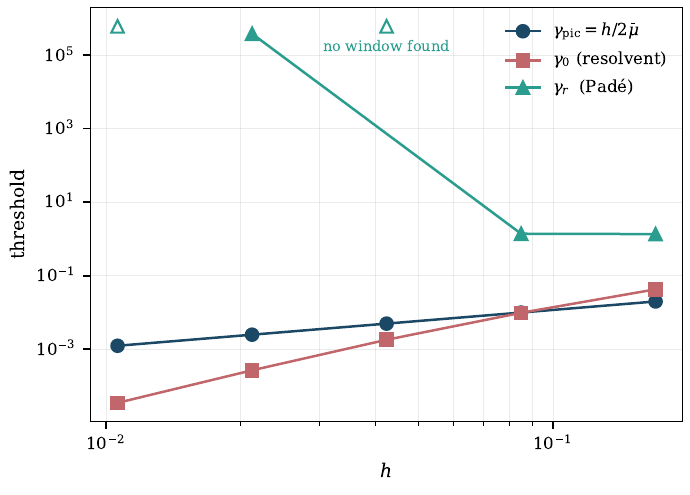}
\caption{Coverage thresholds against the mesh on the OU benchmark. The Picard threshold falls linearly in $h$. The resolvent threshold falls faster and crosses below it, so condition \eqref{eq:overlap} holds from $n=101$ onward. The Pad\'e threshold stays three orders of magnitude above and escapes upward, so condition \eqref{eq:overlap2} fails on every mesh. Open symbols mark meshes on which no Pad\'e window was found below the search bound $10^{6}$.}
\label{fig:thresholds}
\end{figure}

\subsection{Ornstein--Uhlenbeck: accuracy against the analytic solution} \label{ssec:numOU}

Given an initial Gaussian distribution with mean $x_0$ and variance $v_0$, the exact OU density remains Gaussian with mean $x_0 e^{-\alpha t}$ and variance $(\sigma^2/2\alpha)(1-e^{-2\alpha t})+v_0 e^{-2\alpha t}$, so the error is measured against an exact target. We take initial values $x_0=0.5$ and $v_0=10^{-2}$ and integrate to $T=0.2$. The maximum cell P\'eclet number on these meshes is $0.36$, meaning the problem is diffusion-dominated and the limiter is expected to remain inactive except in the tails.

\Cref{tab:ouspace} fixes $\Delta t=2\times10^{-4}$ and refines the mesh. FCDF-DC converges at second order throughout, reaching $1.98$ on the finest pair. FCDF-B and the unlimited scheme produce errors that agree to every digit reported (but the solutions are not identical), which is \cref{prop:schemeBp}(iv) holding exactly: on a smooth density the budgets are slack at every interface, the limiters equal one, and the limited fixed point is the unlimited solution. Their common order falls on the last two meshes because the fixed time step no longer isolates the spatial error, and the same happens to CC. The monotone core sits at first order as expected.

\begin{table}[!htb]
\centering
\begin{tabular}{cccccc}
\hline
$h$ & FCDF-DC & FCDF-B & unlimited & CC & core \\
\hline
$1.697\times10^{-1}$ & --   & --   & --   & --   & --   \\
$8.485\times10^{-2}$ & 1.80 & 1.78 & 1.78 & 2.03 & 0.74 \\
$4.243\times10^{-2}$ & 1.90 & 1.80 & 1.80 & 1.73 & 0.93 \\
$2.121\times10^{-2}$ & 1.96 & 1.59 & 1.59 & 1.24 & 0.98 \\
$1.061\times10^{-2}$ & 1.98 & 0.93 & 0.93 & 0.60 & 1.04 \\
\hline
\end{tabular}
\caption{Observed $L_1$ orders in space on the OU benchmark at $\Delta t=2\times10^{-4}$, $T=0.2$. The decay of the last two columns on fine meshes is the temporal floor of the first-order-in-time schemes, not a property of their stencils.}
\label{tab:ouspace}
\end{table}

In \Cref{tab:outime}, the spatial grid is fixed at $n=401$ while the time step is successively refined. Each scheme is compared against the exact solution of its respective semi-discrete system: $e^{TA_2}\bm p^{\,0}$ for FCDF-B and FCDF-DC, and $e^{TA_1}\bm p^{\,0}$ for the monotone core. Utilizing a single shared reference would unfairly penalize the monotone core by incorporating the fixed difference between the two spatial operators. On this particular mesh, this discrepancy is $5.3\times10^{-3}$, an error that no amount of temporal refinement can eliminate.

\begin{table}[!htb]
\centering
\begin{tabular}{ccccc}
\hline
$\Delta t$ & FCDF-DC error & order & FCDF-B order & core order \\
\hline
$8.0\times10^{-3}$ & $2.071\times10^{-3}$ & --   & --   & --   \\
$4.0\times10^{-3}$ & $4.668\times10^{-4}$ & 2.15 & 1.00 & 1.00 \\
$2.0\times10^{-3}$ & $4.821\times10^{-5}$ & 3.28 & 1.00 & 1.00 \\
$1.0\times10^{-3}$ & $4.994\times10^{-6}$ & 3.27 & 1.00 & 1.00 \\
$5.0\times10^{-4}$ & $1.317\times10^{-6}$ & 1.92 & 1.00 & 1.00 \\
$2.5\times10^{-4}$ & $3.320\times10^{-7}$ & 1.99 & 1.00 & 1.00 \\
\hline
\end{tabular}
\caption{Observed $L_1$ orders in time on the OU benchmark at $n=401$, $T=0.2$, against the exact-in-time solution of each scheme's own semi-discrete system.}
\label{tab:outime}
\end{table}

FCDF-B and the monotone core exhibit first-order convergence in time to three significant digits across all tested step sizes. FCDF-DC, however, passes through a pre-asymptotic phase where the observed order initially exceeds three. This occurs because the leading second-order error term carries a small coefficient for this problem, causing the subsequent term to dominate until the step size becomes sufficiently small, after which the rate settles at $1.92$ and $1.99$. Simultaneously refining space and time such that $\Delta t\sim h$ yields observed convergence rates of $1.80$, $1.88$, and $1.95$ for FCDF-DC. This confirms the statement of \cref{cor:coverage}(b) in the regime where the caps are slack.

\subsection{P\'eclet sweep: FCDF versus Chang--Cooper} \label{ssec:numPeclet}

The contrast of \cref{sec:peclet} is between a stencil that is modified everywhere according to the cell P\'eclet number and a limiter that responds to the solution. It is therefore a statement about spatial discretization, and it is measured on a smooth solution, where the FCDF budgets are never exceeded. The benchmark is constant-coefficient advection diffusion, for which a Gaussian initial density stays Gaussian with mean $x_0+\mu t$ and variance $v_0+2Dt$. The domain is padded by eight standard deviations of the final profile on both sides, so the zero-flux boundaries are never reached and the free-space solution is an exact reference. The P\'eclet number is swept by lowering $D$ at fixed mesh sequence.

\Cref{tab:peclet} reports the pure spatial order, obtained by comparing $e^{TA}\bm p^{\,0}$ with the analytic density for the two linear operators involved. Both operators are exact in time under this comparison, so the two columns differ only through their stencils.

\begin{table}[!htb]
\centering
\begin{tabular}{ccccc}
\hline
$D$ & $\mathrm{Pe}_h$ (finest) & $A_2$ order & CC order & error ratio \\
\hline
$10^{-1}$ & $0.03$ & $1.97$ & $2.00$ & $0.5$ \\
$10^{-2}$ & $0.15$ & $1.99$ & $1.98$ & $1.1$ \\
$10^{-3}$ & $1.16$ & $1.99$ & $1.63$ & $8.1$ \\
$3.00\times10^{-4}$ & $3.79$ & $1.99$ & $1.21$ & $21.9$ \\
\hline
\end{tabular}
\caption{Pure spatial $L_1$ order on the smooth advection benchmark, from the semi-discrete solutions, at $T=0.1$ on the mesh sequence $n=101,\dots,801$. The last column is the ratio of the CC error to the $A_2$ error on the finest mesh.}
\label{tab:peclet}
\end{table}

The numerical evidence is unambiguous: the second-order upwind operator of the DF discretization maintains a convergence order of $1.99$ across the entire P\'eclet number sweep. The CC scheme coincides with this at low P\'eclet numbers where the fitting weight approaches one half and the stencil is nearly centred, but degrades monotonically as convection dominates. At $\mathrm{Pe}_h \approx 3.8$, the order drops to $1.21$, producing an absolute error twenty-two times larger. As predicted by the fitting construction, this degradation is unconditional and independent of the solution. Furthermore, evaluating the fully discrete schemes yields convergence rates of $1.97$, $1.98$, $1.99$, and $1.99$ for FCDF-DC across the same parameter sweep, confirming that the limited scheme achieves its theoretical stencil order in practice.

The second part of the study is the order-one front, where \cref{prop:peclet}(b) predicts the monotone floor. We take $\mu=1$, $D=10^{-4}$ and a plateau of unit mass on $[0.1,0.4]$, so the front width $D/\mu=10^{-4}$ is below every mesh size used and the front is unresolved throughout. FCDF-B converges at observed orders $0.95$, $1.18$ and $1.47$, and CC at $0.65$, $0.73$ and $0.85$, with FCDF-B carrying an absolute error five times smaller on the finest mesh. Both schemes remain nonnegative and conserve mass to round-off. Neither reaches second order, which is the floor rather than a defect, and the comparison of the two at a front says little about the mechanism, which is why the smooth study above carries the claim.

This run provides direct verification of the limiter's localization. Although the limiter is active on $38\%$ of the interfaces at $n=801$, this statistic is misleading in isolation. The maximum density adjacent to any limited interface is merely $9.9\times10^{-3}$ (against an overall peak density of $3.38$), and the total mass contained within the limited region is only $1.7\times10^{-5}$ out of a total mass of one. Consequently, the limiter activates strictly where the density has dropped to the truncation level. This behavior confirms the localization claimed in \cref{prop:schemeBp}(iv) and rigorously quantified by \eqref{eq:suffslack}. Ultimately, the relevant metric for limiter impact is not the raw interface count, but rather the density scale within the limited region.

\Cref{fig:limiter} visualizes the spatial localization of the flux limiter on a logarithmic density scale for the advection-dominated front benchmark with $D=10^{-4}$ at $n=801$ grid points, confirming that the limiter activates exclusively in regions where the density has fallen to the truncation-error level.

\begin{figure}[!htb]
\centering
\includegraphics[width=0.6\textwidth]{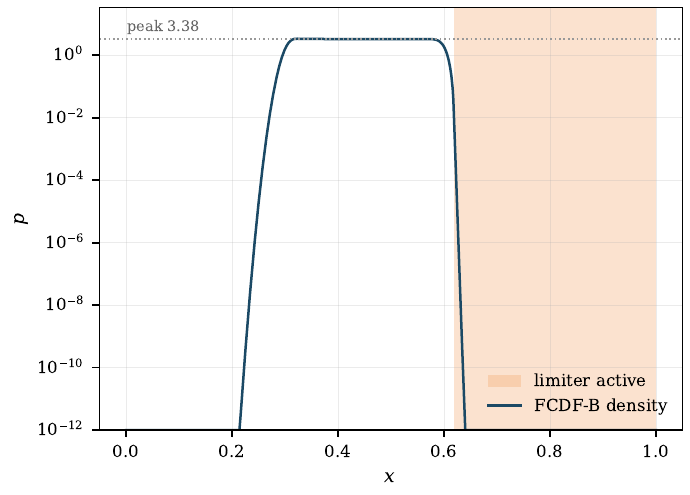}
\caption{Limiter activity at an unresolved front, $D=10^{-4}$ and $n=801$, on a logarithmic density scale. The shaded interfaces are those where the clamp is active. They begin only where the density has fallen several orders of magnitude below the plateau, and the advected packet itself is untouched. The clamp is active on $38\%$ of the interfaces, which carry $1.7\times10^{-5}$ of the total mass.}
\label{fig:limiter}
\end{figure}

\subsection{Long-time front smearing} \label{ssec:numLongtime}

\Cref{prop:peclet} bounds a single resolvent solve. This experiment asks what survives over many steps, when a monotone scheme smears a front progressively and the pure-advection rate of the monotone core is known to fall to $O(h^{1/2})$. The front benchmark is integrated at fixed $\Delta t=2\times10^{-4}$ to four horizons, and the spatial order is measured at each.

\begin{table}[!htb]
\centering
\begin{tabular}{ccccc}
\hline
$T$ & steps & FCDF-B order & core order \\
\hline
0.05 &  250 & 1.05 & 0.70 \\
0.10 &  500 & 1.12 & 0.69 \\
0.20 & 1000 & 1.18 & 0.68 \\
0.40 & 2000 & 1.25 & 0.67 \\
\hline
\end{tabular}
\caption{Observed spatial $L_1$ order at a front as the number of steps grows, on the advection benchmark with $D=10^{-4}$.}
\label{tab:longtime}
\end{table}

The two schemes separate and stay separated. FCDF-B holds an observed rate above one at every horizon, and the rate does not deteriorate as the step count grows by a factor of eight. The monotone core settles near two thirds and drifts slowly downward, consistent with the $O(h^{1/2})$ limit of \cite{Kuznetsov1976,Sabac1997}. At the longest horizon the absolute errors are $1.37\times10^{-2}$ for FCDF-B and $1.08\times10^{-1}$ for the core, a factor of eight. The single-solve rate of \cref{prop:peclet}(b) therefore survives long integration, which is the question left open after that proposition.

\subsection{Positivity and conservation} \label{ssec:numAudit}

The last experiment is not an accuracy study. It reports the structural guarantees across the benchmark family, on a smooth problem and a front problem, at a small and a large time step.

\begin{table}[!htb]
\centering
\begin{tabular}{l S S S S}
\hline
{scheme} & {OU, $\Delta t$ small} & {OU, $\Delta t$ large} & {front, $\Delta t$ small} & {front, $\Delta t$ large} \\
\hline
FCDF-B     & 9e-27 & 2e-14 & 2e-133   & 8e-60 \\
FCDF-DC    & 3e-27 & 2e-14 & 2e-77    & 1e-58 \\
unlimited  & 9e-27 & 8e-15 & -2.55e-1 & 1e-65 \\
core       & 4e-26 & 2e-14 & 2e-62    & 8e-59 \\
CC         & 4e-27 & 7e-15 & 0        & 0 \\
\hline
\end{tabular}
\caption{Most negative nodal value after integration to $T=0.2$. Small and large steps are $\Delta t=2\times10^{-4}$ and $\Delta t=5\times10^{-2}$. The mass defect $|\bm1^\top\bm p-\bm1^\top\bm p^{\,0}|$ never exceeds $2.2\times10^{-11}$ for any entry in the table.}
\label{tab:audit}
\end{table}

Of the tested methods, only the unlimited scheme produces a negative density, failing precisely as the theory predicts. It stays nonnegative on the smooth problem at both steps, because smooth positive data does not activate the wrong-signed band even when the resolvent matrix is not monotone. It fails on the front at the small step, where it undershoots to $-0.255$. It recovers at the large step, where the resolvent window of \cite{ItkinKazbek2026ADI} has been entered. Positivity is lost as $\Delta t \to 0$ and regained above the threshold, which is the mirror of the eventual-positivity statement and the reason the small-step regime needed a nonlinear scheme at all. An experiment run only at large steps would have shown no failure.

The FCDF schemes are nonnegative in every column, as are the monotone core and CC, and every scheme conserves the discrete mass to round-off, the largest defect over the whole table being $2.2\times10^{-11}$.

\subsection{The active-set solver} \label{ssec:numActiveSet}

The Picard iteration contracts only below $\gamma_{\mathrm{pic}}$, and \cref{ssec:numOverlap} showed that the Pad\'e window does not open in time to take over. The practical claim of \cref{ssec:activeset} is that the active-set solver covers the remainder cheaply, and that its cost is set by the clamp pattern rather than by the step size. This subsection measures that.

\Cref{tab:asgamma} sweeps the step over six decades around $\gamma_{\mathrm{pic}}$ on the advection-dominated front problem. The solver is initialized from the unlimited solution in every row.

\begin{table}[!htb]
\centering
\begin{tabular}{rccccc}
\hline
$\gamma/\gamma_{\mathrm{pic}}$ & updates & unlimited accepted & free fraction & residual & mass defect \\
\hline
$0.01$    & 1 & no  & 0.300 & $3.6\times10^{-15}$ & $0$ \\
$0.1$     & 1 & no  & 0.300 & $4.9\times10^{-14}$ & $0$ \\
$0.5$     & 1 & no  & 0.300 & $5.7\times10^{-14}$ & $0$ \\
$1$       & 1 & no  & 0.300 & $5.7\times10^{-14}$ & $5.7\times10^{-14}$ \\
$2$       & 1 & no  & 0.300 & $2.8\times10^{-14}$ & $0$ \\
$5$       & 0 & yes & 1.000 & $2.7\times10^{-13}$ & $5.7\times10^{-14}$ \\
$10$      & 0 & yes & 1.000 & $6.3\times10^{-13}$ & $8.5\times10^{-13}$ \\
$10^{2}$  & 0 & yes & 1.000 & $3.6\times10^{-12}$ & $2.4\times10^{-12}$ \\
$10^{3}$  & 0 & yes & 1.000 & $1.5\times10^{-11}$ & $3.8\times10^{-12}$ \\
$10^{4}$  & 0 & yes & 1.000 & $3.4\times10^{-11}$ & $1.7\times10^{-13}$ \\
\hline
\end{tabular}
\caption{Pattern updates against step size on the front problem at $n=401$, where $\gamma_{\mathrm{pic}}=1.25\times10^{-3}$. The residual is $\norm{\bm F(\bm p)}_1$ where the limited system is solved, and $\norm{(\mathcal I-\gamma A_2)\bm p-\bm b}_1$ where the unlimited solution is accepted.}
\label{tab:asgamma}
\end{table}

The cost falls as the step grows, which is the opposite of the Picard behaviour and the reason the two mechanisms compose. Up to twice the Picard threshold one pattern update suffices, and the clamp is active on seventy per cent of the interfaces. From five times the threshold onward the unlimited solve is already nonnegative and is accepted outright, at zero updates and full second order, because the resolvent window of \cite{ItkinKazbek2026ADI} has been entered. The solver is therefore doing work only in the band where work is needed.

The count is also insensitive to the mesh. Repeating three of these step sizes over a refinement sequence gives one update at $0.5\gamma_{\mathrm{pic}}$ and at $2\gamma_{\mathrm{pic}}$, and zero at $100\gamma_{\mathrm{pic}}$, for every $n$ from $101$ to $1601$. Since each update is one banded solve at $O(n)$ cost, the total work per step is a small constant multiple of a single second-order solve.

\Cref{tab:asgap} isolates the case the solver exists for. On this problem condition \eqref{eq:overlap} fails, and it fails by a margin that refinement does not remove.

\begin{table}[!htb]
\centering
\begin{tabular}{rcccc}
\hline
$n$ & $\gamma_{\mathrm{pic}}$ & $\gamma_0$ & $\gamma_0/\gamma_{\mathrm{pic}}$ & updates inside the gap \\
\hline
$101$ & $5.00\times10^{-3}$ & $1.93\times10^{-2}$ & $3.86$ & 1 \\
$201$ & $2.50\times10^{-3}$ & $9.56\times10^{-3}$ & $3.82$ & 1 \\
$401$ & $1.25\times10^{-3}$ & $4.68\times10^{-3}$ & $3.75$ & 1 \\
$801$ & $6.25\times10^{-4}$ & $2.25\times10^{-3}$ & $3.60$ & 1 \\
\hline
\end{tabular}
\caption{The coverage gap on the front problem with $D=10^{-4}$. The last column is the number of pattern updates at $\gamma=\sqrt{\gamma_{\mathrm{pic}}\gamma_0}$, a step in the middle of the interval where neither the contraction argument nor the linear window applies.}
\label{tab:asgap}
\end{table}

The ratio stays near four on every mesh, so there is a genuine interval of step sizes that neither mechanism of \cref{cor:coverage}(a) reaches, and the interval does not close as $h\to0$. This is the complement of the OU picture in \cref{tab:overlap}, where condition \eqref{eq:overlap} holds and the gap is empty. Taken together the two benchmarks show that neither the hand-over nor the active set is dispensable. A step drawn from the middle of the gap costs one pattern update on every mesh.

Only one setting we found requires more. Data with many scattered near-zero nodes, generated as $u^{k}$ for uniform $u$ and $k=2,4,8,16$, fragments the clamped set into many short blocks and needs two or three updates, at step sizes from $0.1\gamma_{\mathrm{pic}}$ to $10\gamma_{\mathrm{pic}}$. The residual reaches $10^{-14}$ in every case and the returned density is nonnegative, as \cref{prop:schemeBp}(i) guarantees for any zero of $\bm F$. We did not find a configuration in which the iteration failed to converge.

\section{Discussion and Conclusions} \label{sec:conclusion}

This paper turns the runtime sign monitoring of the DF solvers into a scheme with proved properties. The second-order directional operator is split into a monotone M-matrix core and an antidiffusive flux correction. And the implicit step is realized by Picard iteration with a Zalesak-type clamp applied per interface inside the banded solve. The resulting FCDF-B scheme is unconditionally positive and exactly mass-conservative for every time step and every limiter value, its sweep map contracts in $\ell_1$ under the purely convective restriction $\gamma < \gamma_{\mathrm{pic}} = h/(2\bar\mu)$ with the explicit constant of \cref{prop:schemeBp}, and its fixed point solves the full second-order system at every node where the budgets are slack. None of this contradicts the barriers of Godunov and of Bolley and Crouzeix. The scheme is nonlinear precisely where the barriers demand it, and the price is paid locally, at the interfaces where the density sits at truncation level.

The accuracy statement is deliberately two-sided. Away from unresolved layers the scheme coincides with the unlimited second-order discretization, and the $L_1$ error obeys the P\'eclet-uniform two-term bound of \cref{prop:peclet}. Tail layers cost $O(h^2)$ and leave second order intact, which is the generic case for smooth Fokker--Planck densities and the contrast with the global first-order degradation of the CC scheme. Interior unresolved fronts of order-one height cost $O(h)$, the monotone floor, and sub-cell boundary accumulation falls outside nodal comparison altogether and is assessed by cellwise mass. The distinction from exponential fitting is one of locality rather than of magnitude at a single interface, and it is visible on smooth solutions at large P\'eclet number, where the limiter stays inactive and the order is retained while the fitted stencil degrades everywhere. We regard stating these limits as part of the result rather than a qualification of it.

On the temporal side, the defect-corrected scheme FCDF-DC restores second order in time from limited backward Euler solves alone, with the defect available in closed form and clamped jointly with the antidiffusive fluxes. Its positivity and exact conservation are unconditional, while second order in time holds where the caps are slack, degrading to the backward Euler floor on the same interfaces where the spatial limiter is active. Combined with the two linear windows of the companion paper, the resolvent above $\gamma_0$ and the Pad\'e$(0,2)$ map above $\gamma_r$, this yields the coverage corollary: positivity and conservation for all step sizes under the computable condition $\gamma_{\mathrm{pic}}\ge\min(\gamma_0,\gamma_r)$, and full second order in time and space, wherever the density is resolved, under $\gamma_{\mathrm{pic}}\ge\gamma_r$. Both conditions are single numbers per mesh, and we report them measured rather than assumed. The first holds on the benchmark family. The second does not, because the Pad\'e threshold is set by the deep-tail stationary mass and grows as the mesh resolves that tail. Hence, the second-order branch at large steps rests on the active-set solver, at a cost governed by the number of nodes where positivity binds.

Several questions remain open and are stated as such in the text. The localization claim of \cref{prop:schemeBp}(iv) rests on the comparability of the budget and the density, which we verify a posteriori rather than prove from the data, and a Harnack-type estimate for the core resolvent, \cite{Moser1964} would close it. The well-posedness of the limited system for all $\gamma$ and the local superlinear convergence of the semismooth Newton iteration are used in the form standard for complementarity problems and deserve a dedicated proof in this setting. On the modeling side, the backward equation admits two limited realizations, transpose-consistent or independently positive, and the choice is deferred to the companion papers together with the temporal order of the composite multidimensional step.

The schemes act line by line and are therefore directly usable as directional factors in the Strang composition of \cite{ItkinDF2026} and the ADI composition of \cite{ItkinKazbek2026ADI}, exactly in the regime those papers leave uncovered. Those are below their linear thresholds with the composite mass propositions holding unchanged by exact conservation for every limiter value. Together, the three papers assemble one picture: eventual positivity carries the exponential and rational maps above explicit thresholds, and the flux-corrected schemes of this paper carry the regime below them, so that positivity, exact conservation, and second order where the density is resolved are available at every step size for which one of the mechanisms applies.

\section*{Acknowledgments}

\noindent Numerical analysis and manuscript preparation were performed in part with the help of Opus 4.8 working as an AI assistant under author’s supervision.

\printbibliography[title={References}]

@Article{ChangCooper1970,
  author  = {Chang, J. S. and Cooper, G.},
  title   = {A practical difference scheme for Fokker-Planck equations},
  journal = {Journal of Computational Physics},
  year    = {1970},
  volume  = {6},
  number  = {1},
  pages   = {1--16},
}

@Article{Harten1983,
  author  = {Harten, Ami},
  title   = {High resolution schemes for hyperbolic conservation laws},
  journal = {Journal of Computational Physics},
  year    = {1983},
  volume  = {49},
  number  = {3},
  pages   = {357--393},
}

@Article{Itkin3D,
  Title                    = {LSV models with stochastic interest rates and correlated jumps},
  Author                   = {A. Itkin},
  Journal                  = {International Journal of Computer Mathematics},
  Year                     = {2017},
  volume                   = {94},
  number                   = {7},
  pages                    = {1291--1317},
}

@Article{OleskyEtAl2009,
  author  = {Olesky, Dale and Tsatsomeros, Michael and Van den Driessche, Pauline},
  title   = {Mv-matrices: a generalization of M-matrices based on eventually nonnegative matrices},
  journal = {The Electronic Journal of Linear Algebra},
  year    = {2009},
  volume  = {18},
  pages   = {339--351},
  doi     = {10.13001/1081-3810.1317}
}

@Article{NoutsosTsatsomeros2008,
  author  = {Noutsos, Dimitrios and Tsatsomeros, Michael J.},
  title   = {Reachability and holdability of nonnegative states},
  journal = {SIAM Journal on Matrix Analysis and Applications},
  year    = {2008},
  volume  = {30},
  number  = {2},
  pages   = {700--712},
  doi     = {10.1137/060678516}
}

@book{BermanPlemmons94,
  author    = {Berman, Abraham and Plemmons, Robert J.},
  title     = {Nonnegative Matrices in the Mathematical Sciences},
  publisher = {Society for Industrial and Applied Mathematics},
  address   = {Philadelphia, PA},
  series    = {Classics in Applied Mathematics},
  volume    = {9},
  year      = {1994},
  doi       = {10.1137/1.9781611971262},
  isbn      = {978-0-89871-321-3}
}

@book{HundsdorferVerwer2003,
  author    = {Hundsdorfer, Willem and Verwer, Jan G.},
  title     = {Numerical Solution of Time-Dependent Advection-Diffusion-Reaction Equations},
  series    = {Springer Series in Computational Mathematics},
  volume    = {33},
  publisher = {Springer},
  address   = {Berlin},
  year      = {2003}
}

@book{FCT2012,
  editor    = {Kuzmin, D. and L{\"o}hner, R. and Turek, S.},
  title     = {Flux-Corrected Transport: Principles, Algorithms, and Applications},
  edition   = {2nd},
  series    = {Scientific Computation},
  publisher = {Springer},
  address   = {Dordrecht},
  year      = {2012}
}

@Article{BolleyCrouzeix1978,
  author  = {Bolley, Catherine and Crouzeix, Michel},
  title   = {Conservation de la positivit{\'e} lors de la discr{\'e}tisation des probl{\`e}mes d'{\'e}volution paraboliques},
  journal = {RAIRO Analyse Num{\'e}rique},
  volume  = {12},
  number  = {3},
  year    = {1978},
  pages   = {237--245}
}

@Article{ScharfetterGummel1969,
  author  = {Scharfetter, D. L. and Gummel, H. K.},
  title   = {Large-signal analysis of a silicon {R}ead diode oscillator},
  journal = {IEEE Transactions on Electron Devices},
  volume  = {16},
  number  = {1},
  pages   = {64--77},
  year    = {1969}
}

@Article{LarsenEtAl1985,
  author  = {Larsen, E. W. and Levermore, C. D. and Pomraning, G. C. and Sanderson, J. G.},
  title   = {Discretization methods for one-dimensional {F}okker--{P}lanck operators},
  journal = {Journal of Computational Physics},
  volume  = {61},
  number  = {3},
  pages   = {359--390},
  year    = {1985}
}

@Article{BuetDellacherie2010,
  author  = {Buet, C. and Dellacherie, S.},
  title   = {On the {C}hang and {C}ooper scheme applied to a linear {F}okker--{P}lanck equation},
  journal = {Communications in Mathematical Sciences},
  volume  = {8},
  number  = {4},
  pages   = {1079--1090},
  year    = {2010}
}

@Article{MohammadiBorzi2015,
  author  = {Mohammadi, M. and Borz{\`i}, A.},
  title   = {Analysis of the {C}hang--{C}ooper discretization scheme for a class of {F}okker--{P}lanck equations},
  journal = {Journal of Numerical Mathematics},
  volume  = {23},
  number  = {3},
  pages   = {271--288},
  year    = {2015}
}

@Article{BessemoulinChatard2012,
  author  = {Bessemoulin-Chatard, M.},
  title   = {A finite volume scheme for convection--diffusion equations with nonlinear diffusion derived from the {S}charfetter--{G}ummel scheme},
  journal = {Numerische Mathematik},
  volume  = {121},
  number  = {4},
  pages   = {637--670},
  year    = {2012}
}

@Article{PareschiZanella2018,
  author  = {Pareschi, L. and Zanella, M.},
  title   = {Structure preserving schemes for nonlinear {F}okker--{P}lanck equations and applications},
  journal = {Journal of Scientific Computing},
  volume  = {74},
  number  = {3},
  pages   = {1575--1600},
  year    = {2018}
}

@Article{Sweby1984,
  author  = {Sweby, P. K.},
  title   = {High resolution schemes using flux limiters for hyperbolic conservation laws},
  journal = {SIAM Journal on Numerical Analysis},
  volume  = {21},
  number  = {5},
  pages   = {995--1011},
  year    = {1984}
}

@Article{KuzminTurek2002,
  author  = {Kuzmin, D. and Turek, S.},
  title   = {Flux correction tools for finite elements},
  journal = {Journal of Computational Physics},
  volume  = {175},
  number  = {2},
  pages   = {525--558},
  year    = {2002}
}

@Article{Kuzmin2009,
  author  = {Kuzmin, D.},
  title   = {Explicit and implicit {FEM-FCT} algorithms with flux linearization},
  journal = {Journal of Computational Physics},
  volume  = {228},
  number  = {7},
  pages   = {2517--2534},
  year    = {2009}
}

@Article{BarrenecheaJohnKnobloch2016,
  author  = {Barrenechea, G. R. and John, V. and Knobloch, P.},
  title   = {Analysis of algebraic flux correction schemes},
  journal = {SIAM Journal on Numerical Analysis},
  volume  = {54},
  number  = {4},
  pages   = {2427--2451},
  year    = {2016}
}

@Article{BarrenecheaJohnKnobloch2024,
  author  = {Barrenechea, G. R. and John, V. and Knobloch, P.},
  title   = {Finite element methods respecting the discrete maximum principle for convection--diffusion equations},
  journal = {SIAM Review},
  volume  = {66},
  number  = {1},
  pages   = {3--88},
  year    = {2024}
}

@Article{ZhangShu2010,
  author  = {Zhang, X. and Shu, C.-W.},
  title   = {On maximum-principle-satisfying high order schemes for scalar conservation laws},
  journal = {Journal of Computational Physics},
  volume  = {229},
  number  = {9},
  pages   = {3091--3120},
  year    = {2010}
}

@Article{GottliebShuTadmor2001,
  author  = {Gottlieb, S. and Shu, C.-W. and Tadmor, E.},
  title   = {Strong stability-preserving high-order time discretization methods},
  journal = {SIAM Review},
  volume  = {43},
  number  = {1},
  pages   = {89--112},
  year    = {2001}
}

@Article{KopeczMeister2018,
  author  = {Kopecz, S. and Meister, A.},
  title   = {On order conditions for modified {P}atankar--{R}unge--{K}utta schemes},
  journal = {Applied Numerical Mathematics},
  volume  = {123},
  pages   = {159--179},
  year    = {2018}
}

@Article{DanersGlueckKennedy2016,
  author  = {Daners, D. and Gl{\"u}ck, J. and Kennedy, J. B.},
  title   = {Eventually positive semigroups of linear operators},
  journal = {Journal of Mathematical Analysis and Applications},
  volume  = {433},
  number  = {2},
  pages   = {1561--1593},
  year    = {2016}
}

@Article{QiSun1993,
  author  = {Qi, L. and Sun, J.},
  title   = {A nonsmooth version of {N}ewton's method},
  journal = {Mathematical Programming},
  volume  = {58},
  pages   = {353--367},
  year    = {1993}
}

@Article{BurchardEtAl2003,
  author  = {Burchard, H. and Deleersnijder, E. and Meister, A.},
  title   = {A high-order conservative {P}atankar-type discretisation for stiff systems of production--destruction equations},
  journal = {Applied Numerical Mathematics},
  volume  = {47},
  number  = {1},
  pages   = {1--30},
  year    = {2003}
}

@Article{HintermuellerItoKunisch2002,
  author  = {Hinterm{\"u}ller, M. and Ito, K. and Kunisch, K.},
  title   = {The primal-dual active set strategy as a semismooth {N}ewton method},
  journal = {SIAM Journal on Optimization},
  volume  = {13},
  number  = {3},
  pages   = {865--888},
  year    = {2002}
}

@article{Zalesak1979,
  author  = {Zalesak, Steven T},
  title   = {Fully multidimensional flux-corrected transport algorithms for fluids},
  journal = {Journal of Computational Physics},
  year    = {1979},
  volume  = {31},
  pages   = {335--362},
  doi     = {10.1016/0021-9991(79)90051-2}
}

@unpublished{ItkinKazbek2026ADI,
  title={Diagonal Frog meets ADI: trading matrix exponentials for resolvents in the Fokker-Planck equation},
  author={Itkin, Andrey and Kazbek, Rakhymzhan},
  year={2026},
  note={in preparation}
}

@article{BorisBook1973,
  title={Flux-corrected transport. I. SHASTA, a fluid transport algorithm that works},
  author={Boris, Jay P. and Book, David L.},
  journal={Journal of Computational Physics},
  volume={11},
  number={1},
  pages={38--69},
  year={1973},
  publisher={Elsevier},
  doi={10.1016/0021-9991(73)90147-2}
}

@article{Godunov1959,
  title={A difference method for numerical calculation of discontinuous solutions of the equations of hydrodynamics},
  author={Godunov, S. K.},
  journal={Mat. Sb. (N.S.)},
  volume={47(89)},
  number={3},
  pages={271--306},
  year={1959}
}

@article{ItkinDF2026,
  title={Diagonal Frog: High-order positivity-preserving FD schemes for anisotropic Fokker-Planck equations},
  author={Itkin, Andrey},
  year={2026},
  eprint={2606.23980},
  archivePrefix={arXiv},
  primaryClass={math.NA}
}

@Book{ItkinLocalVol,
  author    = {A. Itkin},
  title     = {{Fitting Local Volatility: Analytic and Numerical Approaches in Black-Scholes and Local Variance Gamma Models}},
  number    = {11623},
  Year      = {2020},
  publisher = {World Scientific Publishing Co. Pte. Ltd.},
}

@article{Kuznetsov1976,
  author  = {Kuznetsov, N. N.},
  title   = {Accuracy of some approximate methods for computing the weak solutions of a first-order quasi-linear equation},
  journal = {USSR Computational Mathematics and Mathematical Physics},
  volume  = {16},
  number  = {6},
  pages   = {105--119},
  year    = {1976}
}

@article{Sabac1997,
  author  = {{\c{S}}abac, F.},
  title   = {The optimal convergence rate of monotone finite difference methods for hyperbolic conservation laws},
  journal = {SIAM Journal on Numerical Analysis},
  volume  = {34},
  number  = {6},
  pages   = {2306--2318},
  year    = {1997}
}

@article{ZhangShu2011,
  author  = {Zhang, X. and Shu, C.-W.},
  title   = {Maximum-principle-satisfying and positivity-preserving high-order schemes for conservation laws: survey and new developments},
  journal = {Proceedings of the Royal Society A: Mathematical, Physical and Engineering Sciences},
  volume  = {467},
  number  = {2134},
  pages   = {2752--2776},
  year    = {2011}
}

@article{Moser1964,
  author    = {J{\"u}rgen Moser},
  title     = {A Harnack inequality for parabolic differential equations},
  journal   = {Communications on Pure and Applied Mathematics},
  volume    = {17},
  number    = {1},
  pages     = {101--134},
  year      = {1964},
  publisher = {Wiley},
  doi       = {10.1002/cpa.3160170106}
}

\vspace{0.4in}
\appendixpage
\appendix
\numberwithin{equation}{section}
\setcounter{equation}{0}

\section{Proof of \cref{prop:schemeBp}} \label{appSchemeB}

Throughout, $\gamma>0$ is fixed, $M:=\mathcal I-\gamma A_1$, and $\bm b\ge0$. We write the limiter with the even budget split $\kappa_j\equiv2$, so that the caps assigned to interface $i+\tfrac12$ are
\begin{equation}\label{eq:caps}
c^{+}_{i+1/2} \;=\; \frac{h\,b_i}{2\gamma}, \qquad
c^{-}_{i+1/2} \;=\; \frac{h\,b_{i+1}}{2\gamma},
\end{equation}
the first bounding a positive flux, which withdraws from node $i$, and the second a negative flux, which withdraws from node $i+1$. The boundary interfaces carry no correction flux under the zero-flux closure.

\begin{lemma}[The core resolvent]\label{lem:core}
$M$ is a nonsingular M-matrix with unit column sums. Consequently $M^{-1}>0$ entrywise, $\bm1^\top M^{-1}=\bm1^\top$, and $\norm{M^{-1}}_1=1$ exactly.
\end{lemma}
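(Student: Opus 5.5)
The plan is to establish the four claims in sequence: the M-matrix structure of $M$, then inverse positivity, then unit column sums, and finally the norm identity.

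\emph{M-matrix structure.} By \eqref{eq:A1coeffs} and the zero-flux closure, $A_1$ is tridiagonal and Metzler, with nonnegative off-diagonal entries $\beta'_i,\delta'_i$. Since $\bm1^\top A_1=0$, the diagonal entries satisfy $(A_1)_{jj}=-\sum_{i\ne j}(A_1)_{ij}\le0$. Then $M=\mathcal I-\gamma A_1$ is a Z-matrix, and its diagonal is $1+\gamma\sum_{i\ne j}(A_1)_{ij}$. Each column $j$ therefore satisfies $M_{jj}-\sum_{i\ne j}|M_{ij}|=1>0$, so $M$ is strictly column diagonally dominant with positive diagonal. A strictly diagonally dominant Z-matrix with positive diagonal is a nonsingular M-matrix (see \cite{BermanPlemmons94}), which gives $M^{-1}\ge0$.

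\emph{Strict positivity.} To upgrade $M^{-1}\ge0$ to $M^{-1}>0$, I would use irreducibility. $A_1$ is irreducible because $\delta'_i=D_{i+1}/h^2>0$ and $\beta'_i>0$ couple every neighbouring pair; the same holds for $M$. An irreducible nonsingular M-matrix has an entrywise positive inverse. Concretely, write $M=sI-N$ with $N\ge0$ irreducible and $\rho(N)<s$. Then $M^{-1}=s^{-1}\sum_k (N/s)^k$, and irreducibility makes $\sum_{k<n}N^k$ positive.

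\emph{Column sums and norm.} Transposing $\bm1^\top A_1=0$ gives $\bm1^\top M=\bm1^\top$. Multiplying on the right by $M^{-1}$ yields $\bm1^\top M^{-1}=\bm1^\top$. The induced $\ell_1$ norm is the maximal absolute column sum. Since every entry of $M^{-1}$ is positive, each absolute column sum equals the column sum, which is $1$. Hence $\norm{M^{-1}}_1=1$ exactly.

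\emph{Main obstacle.} The only delicate point is irreducibility together with the sign of the diagonal near the boundary. Both depend on the zero-flux closure rows and on the directional treatment of the drift. For sign-changing $\mu$, the upwind coefficient may move from $\beta'$ to $\delta'$, but it stays nonnegative. Positivity of the diffusion couplings $D_{i\pm1}/h^2$ then supplies irreducibility wherever $D>0$. If $D$ vanishes somewhere, I would fall back on the upwind couplings, or settle for $M^{-1}\ge0$. Nonnegativity already suffices for the norm identity, because the column-sum argument only needs $M^{-1}\ge0$.
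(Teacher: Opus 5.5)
Your proof is correct and follows the paper's argument. Strict column diagonal dominance with unit margin is exactly the paper's criterion $M^\top\bm1=\bm1>0$ for the Z-matrix $M^\top$, and you then use irreducibility for $M^{-1}>0$ (your Neumann-series step spells out what the paper cites), $\bm1^\top M=\bm1^\top$ for the column sums, and the maximal-column-sum formula for $\norm{M^{-1}}_1=1$, all in the same way as the paper.
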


\begin{proof}
$A_1$ is Metzler with $\bm1^\top A_1=0$, so $M$ has positive diagonal, nonpositive off-diagonal entries, and $\bm1^\top M=\bm1^\top$. Then $M^\top$ is a Z-matrix satisfying $M^\top\bm1=\bm1>0$, which makes $M^\top$, and hence $M$, a nonsingular M-matrix \cite{BermanPlemmons94}. Irreducibility of $A_1$ transfers to $M$, so $M^{-1}$ is strictly positive. Multiplying $\bm1^\top M=\bm1^\top$ by $M^{-1}$ on the right gives $\bm1^\top M^{-1}=\bm1^\top$, so every column of the nonnegative matrix $M^{-1}$ sums to one, and the induced $\ell_1$ norm, which is the maximal absolute column sum, equals one.
\end{proof}

\begin{lemma}[Clamp form of the limiter]\label{lem:clamp}
For any $\bm p\in\mathbb{R}^n$, the limited flux produced by \eqref{eq:zalesak} with the even split equals the clamp of the unlimited flux onto the fixed interval defined by the caps \eqref{eq:caps},
\begin{equation}\label{eq:clamp}
\theta_{i+1/2}\,d_{i+1/2}(\bm p) \;=\; \Lambda_{i+1/2}(\bm p)
\;:=\; \max\Bigl(-c^{-}_{i+1/2},\,\min\bigl(d_{i+1/2}(\bm p),\,c^{+}_{i+1/2}\bigr)\Bigr).
\end{equation}
The map $d\mapsto\Lambda$ is monotone and satisfies $|\Lambda(d)-\Lambda(d')|\le|d-d'|$ for any two flux values, since a clamp onto a fixed interval containing zero is nonexpansive.
\end{lemma}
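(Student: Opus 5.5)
The plan is to verify the identity \eqref{eq:clamp} by cases on the sign of the unlimited flux $d:=d_{i+1/2}(\bm p)$, and then read off the nonexpansiveness from the elementary properties of the map $x\mapsto\max(-a,\min(x,b))$ with $a,b\ge0$.

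\emph{Sign cases.}
\begin{itemize}
\item If $d=0$, both sides vanish; the value of $\theta_{i+1/2}$ is irrelevant, and the clamp returns $0$ because $-c^-\le0\le c^+$. This uses $\bm b\ge0$.
\item If $d>0$, the donor is $j=i$. Then \eqref{eq:zalesak} with $\kappa_j=2$ gives
\[
\theta_{i+1/2}=\min\bigl(1,\;h b_i/(2\gamma d)\bigr),
\]
hence $\theta_{i+1/2}\,d=\min\bigl(d,\;h b_i/(2\gamma)\bigr)=\min(d,c^+_{i+1/2})$. This quantity is nonnegative, so the outer $\max$ with $-c^-_{i+1/2}\le0$ leaves it unchanged.
\item If $d<0$, the donor is $i+1$. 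Symmetrically, $\theta_{i+1/2}\,d=-\min\bigl(|d|,\;h b_{i+1}/(2\gamma)\bigr)=\max(d,-c^-_{i+1/2})$. This quantity is nonpositive, so the inner $\min$ with $c^+_{i+1/2}\ge0$ does not act.
\end{itemize}
In all three cases the two expressions coincide.

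One point needs care: a zero budget, $b_j=0$. Then the cap is $0$, the ratio in \eqref{eq:zalesak} is $0$, and $\theta_{i+1/2}=0$. This agrees with the clamp to $0$, so the formula holds with the convention $0/|d|=0$. The boundary interfaces carry no flux and are trivially consistent.

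\emph{Monotonicity and nonexpansiveness.} The caps depend only on $\bm b$, $h$ and $\gamma$, not on $\bm p$, so the interval $[-c^-,c^+]$ is fixed and contains $0$.
\begin{itemize}
\item Monotonicity holds because $\Lambda$ is the composition of the nondecreasing maps $\min(\cdot,c^+)$ and $\max(-c^-,\cdot)$.
\item For the Lipschitz bound, each of these maps is $1$-Lipschitz, since $|\min(x,c)-\min(y,c)|\le|x-y|$ and likewise for $\max$. The composition is therefore $1$-Lipschitz.
\end{itemize}
Equivalently, $\Lambda$ is the metric projection onto a closed interval, which is nonexpansive.

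The main subtlety is not analytic but definitional. We must check that the withdrawal count $\kappa_j$ in \eqref{eq:zalesak} is replaced by the constant $2$. If $\kappa_j$ depended on the current flux signs, the cap would vary with $\bm p$ and the interval would not be fixed. The even split $\kappa_j\equiv2$ is exactly what freezes the interval, and I would state this explicitly.
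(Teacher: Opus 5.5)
Your proof is correct and follows the paper's argument. You use the same split on the sign of $d$, giving $\theta d=\min(d,c^{+})$ for $d>0$ and $\theta d=\max(d,-c^{-})$ for $d<0$, with $0\in[-c^{-},c^{+}]$ by nonnegativity of the budgets, and then the monotonicity and nonexpansiveness of a clamp onto a fixed interval; your extra checks (zero budgets, the explicit composition of $1$-Lipschitz nondecreasing maps, and the point that $\kappa_j\equiv2$ is what keeps the interval independent of the iterate) are accurate and only spell out what the paper leaves implicit.
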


\begin{proof}
For $d>0$ the rule \eqref{eq:zalesak} gives $\theta d=\min(d,c^{+})$, for $d<0$ it gives $\theta d=\max(d,-c^{-})$, and for $d=0$ both sides vanish. The interval $[-c^{-},c^{+}]$ contains zero because $\bm b\ge0$, and the clamp onto a fixed interval is nonexpansive and monotone by inspection.
\end{proof}

With \cref{lem:clamp} one sweep of the iteration \eqref{eq:picardlim}, with the limiter recomputed from the current iterate, is the map
\begin{equation}\label{eq:sweepmap}
\Phi(\bm p) \;=\; M^{-1}\bigl(\bm b+\gamma\,D_\Lambda(\bm p)\bigr),
\qquad
\bigl(D_\Lambda(\bm p)\bigr)_i \;=\; -\,\frac{\Lambda_{i+1/2}(\bm p)-\Lambda_{i-1/2}(\bm p)}{h},
\end{equation}
so that $\bm p^{[k+1]}=\Phi(\bm p^{[k]})$ and $\bm p^{[0]}=\bm b$.

\begin{proof}[Proof of \cref{prop:schemeBp}]
\emph{(i) Positivity.} We show that the right-hand side $\bm r(\bm p):=\bm b+\gamma D_\Lambda(\bm p)$ is nonnegative for \emph{every} $\bm p\in\mathbb{R}^n$, after which $\Phi(\bm p)=M^{-1}\bm r(\bm p)\ge0$ by \cref{lem:core}, and all iterates are nonnegative since $\bm p^{[0]}=\bm b\ge0$. Fix a node $i$. The interface $i+\tfrac12$ contributes $-\gamma\Lambda_{i+1/2}/h$ to $r_i$, which is negative only if $\Lambda_{i+1/2}>0$, in which case its magnitude is at most $\gamma c^{+}_{i+1/2}/h=b_i/2$ by \eqref{eq:clamp}. The interface $i-\tfrac12$ contributes $+\gamma\Lambda_{i-1/2}/h$, which is negative only if $\Lambda_{i-1/2}<0$, in which case its magnitude is at most $\gamma c^{-}_{i-1/2}/h=b_i/2$. All other contributions to $r_i$ are nonnegative deposits, so $r_i\ge b_i-\tfrac{b_i}{2}-\tfrac{b_i}{2}=0$. Note that no window in $\gamma$ was used, and that a node with $b_i=0$ has zero caps, so nothing is withdrawn from an empty node.

\emph{(ii) Conservation.} Each interface value $\Lambda_{i+1/2}$ enters $D_\Lambda$ exactly twice, in row $i$ with weight $-1/h$ and in row $i+1$ with weight $+1/h$, while the boundary interfaces carry no flux. Hence $\bm1^\top D_\Lambda(\bm p)=0$ identically, for every $\bm p$ and every value of the limiters, since scaling an interface term rescales both of its appearances together. Applying $\bm1^\top$ to $M\bm p^{[k+1]}=\bm r(\bm p^{[k]})$ and using $\bm1^\top M=\bm1^\top$ gives $\bm1^\top\bm p^{[k+1]}=\bm1^\top\bm b$, and the base case $\bm p^{[0]}=\bm b$ is immediate.

\emph{(iii) Contraction.} Let $\bm p,\bm p'\in\mathbb{R}^n$. By \cref{lem:core} and \eqref{eq:sweepmap},
\begin{align*}
\norm{\Phi(\bm p)-\Phi(\bm p')}_1 &\le \gamma\,\norm{D_\Lambda(\bm p)-D_\Lambda(\bm p')}_1 \;\le\; \frac{2\gamma}{h}\sum_{\text{interfaces}}\bigl|\Lambda_{i+1/2}(\bm p)-\Lambda_{i+1/2}(\bm p')\bigr| \\
&\le \frac{2\gamma}{h}\sum_{\text{interfaces}}\bigl|d_{i+1/2}(\bm p)-d_{i+1/2}(\bm p')\bigr|,
\end{align*}
the middle step because each interface difference appears in at most two rows, and the last step by the nonexpansiveness in \cref{lem:clamp}. The correction flux is a difference of two neighboring values with coefficients bounded by $\bar\mu/2$, so each nodal difference $|p_j-p'_j|$ enters the sum through at most two interfaces, each with weight at most $\bar\mu/2$, giving $\sum|d(\bm p)-d(\bm p')|\le\bar\mu\,\norm{\bm p-\bm p'}_1$. Altogether
\begin{equation}\label{eq:qbound}
\norm{\Phi(\bm p)-\Phi(\bm p')}_1 \;\le\; q\,\norm{\bm p-\bm p'}_1,
\qquad q\;=\;\frac{2\gamma\bar\mu}{h},
\end{equation}
so the constant in the proposition is $c=2$. For $q<1$ the map $\Phi$ is a contraction on $(\mathbb{R}^n,\norm{\cdot}_1)$ and possesses a unique fixed point $\bm p^{*}$ by the Banach theorem, the iterates converge to it geometrically, and $\bm p^{*}=\Phi(\bm p^{*})\ge0$ by part (i). The frozen-limiter variant is the linear map obtained by fixing $\theta$, and the same estimate applies since $\norm{C_\theta}_1\le\norm{C}_1\le2\bar\mu/h$ uniformly in $\theta\in[0,1]^{n-1}$. No parabolic term enters \eqref{eq:qbound} because $D$ appears only in $A_1$, whose resolvent was bounded by exactly one in \cref{lem:core}.

\emph{(iv) Conditional consistency.} We prove the two claims in turn. First, the exact statement. Suppose that at the fixed point the caps are slack at both interfaces adjacent to a node $i$,
\begin{equation}\label{eq:slack}
\gamma\,\bigl|d_{i\pm1/2}(\bm p^{*})\bigr| \;\le\; \frac{h\,b_{j}}{2},
\end{equation}
with $j$ the respective donor node. Then $\Lambda_{i\pm1/2}(\bm p^{*})=d_{i\pm1/2}(\bm p^{*})$ by \eqref{eq:clamp}, equivalently $\theta_{i\pm1/2}=1$, and row $i$ of the fixed-point equation $M\bm p^{*}=\bm b+\gamma D_\Lambda(\bm p^{*})$ coincides with row $i$ of the unlimited second-order system $(\mathcal I-\gamma A_2)\,\bm p=\bm b$. On any region where \eqref{eq:slack} holds at every interface, the converged iterate therefore satisfies the full second-order equations row by row. If \eqref{eq:slack} holds at every interface of the grid, then $\bm p^{*}$ is exactly the unlimited solution, since the latter is then a fixed point of $\Phi$ and the fixed point is unique.

Second, the sufficient condition. Suppose that on a set of interfaces the fixed point satisfies the log-Lipschitz resolution condition of \cite{ItkinDF2026} with constant $\Lambda_p$, so that neighboring values are comparable, $\max(p^{*}_i,p^{*}_{i-1})\le e^{\Lambda_p h}\min(p^{*}_i,p^{*}_{i-1})$, and let $\mu$ be Lipschitz with constant $L_\mu$. Assuming $\mu_i > 0$ without loss of generality (the $\mu_i < 0$ case being symmetric on $\{i+1, i+2\}$), writing $m := \min(p^{}i, p^{*}{i-1})$, we have
\begin{equation*}
\bigl|d_{i+1/2}(\bm p^{*})\bigr|
\;=\;\tfrac12\bigl|(\mu p^{*})_i-(\mu p^{*})_{i-1}\bigr|
\;\le\;\tfrac12\Bigl[\bar\mu\,\bigl(e^{\Lambda_p h}-1\bigr)+L_\mu h\,e^{\Lambda_p h}\Bigr]\,m
\;\le\;\tfrac{h}{2}\,\bigl(\bar\mu\Lambda_p+L_\mu\bigr)\,e^{\Lambda_p h}\,m .
\end{equation*}
Since $m\le p^{*}_{j}$ for the donor node $j$, the slack condition \eqref{eq:slack} holds whenever
\begin{equation}\label{eq:suffslack}
\gamma\,\bigl(\bar\mu\Lambda_p+L_\mu\bigr)\,e^{\Lambda_p h}\; p^{*}_{j} \;\le\; b_{j},
\end{equation}
which is satisfied for all sufficiently small $\gamma$ at every node where $b_j$ and $p^{*}_j$ are comparable, and in particular fails only where the density is at the level of the local truncation error, since there the correction flux is dominated by the discretization error itself rather than by the resolved profile. This proves the localization claim.

We take \eqref{eq:suffslack} as the a posteriori form of the hypothesis. It is computable from the converged iterate, so the localization claim is checked directly on each run rather than assumed, and \cref{ssec:numPeclet} reports that check. Turning it into an a priori statement would require a Harnack-type comparison of $b_j$ with $p^{*}_j$ through the resolvent of $A_1$, which we leave open.

\end{proof}

\section{Proof of \cref{prop:peclet}} \label{appPeclet}

Throughout, $M=\mathcal I-\gamma A_1$, $q=2\gamma\bar\mu/h\le\kappa<1$, and $\norm{\cdot}_1$ denotes the unweighted vector norm, so that $\norm{\bm v}_{1,h}=h\norm{\bm v}_1$. We use three facts already established: $\norm{M^{-1}}_1=1$ (\cref{lem:core}), the sweep maps $\Phi(\bm p)=M^{-1}(\bm b+\gamma D_\Lambda(\bm p))$ and $\Phi_\infty(\bm p)=M^{-1}(\bm b+\gamma C\bm p)$ are Lipschitz in $\ell_1$ with constant $q$ (\cref{prop:schemeBp}(iii), the unlimited map being the linear case), and $\bm p^{*}$, $\hat{\bm p}$ are their respective fixed points. The error splits as
\begin{equation}\label{eq:errsplit}
\bm p^{*}-\bm u \;=\; \underbrace{(\hat{\bm p}-\bm u)}_{\text{consistency}} \;+\; \underbrace{(\bm p^{*}-\hat{\bm p})}_{\text{limiter defect}} .
\end{equation}

\emph{Step 1: stability.} If $\bm e$ satisfies $(\mathcal I-\gamma A_2)\,\bm e=\gamma\bm\tau$, then $\bm e=M^{-1}(\gamma C\bm e+\gamma\bm\tau)$, hence $\norm{\bm e}_1\le q\norm{\bm e}_1+\gamma\norm{\bm\tau}_1$ and
\begin{equation}\label{eq:stab}
\norm{\bm e}_1 \;\le\; \frac{\gamma}{1-\kappa}\,\norm{\bm\tau}_1 .
\end{equation}
No stability assumption on $A_2$ beyond the split is used: the bound is inherited from the M-matrix core through the contraction.

\emph{Step 2: consistency of the unlimited solution.} Evaluating $(1-\gamma\mathcal{L})u=b$ at the nodes and subtracting from $(\mathcal I-\gamma A_2)\hat{\bm p}=\bm b$ gives $(\mathcal I-\gamma A_2)(\hat{\bm p}-\bm u)=\gamma\bm\tau$ with the truncation error $\bm\tau=A_2\bm u-(\mathcal{L}u)|_{\mathrm{grid}}$. Both terms of $\bm\tau$ are flux differences. The discrete operator telescopes with the numerical fluxes of \cref{sec:setting}, and the differential operator satisfies the midpoint identity $(\mathcal{L}u)(x_i)=-[J(x_{i+1/2})-J(x_{i-1/2})]/h+r_i$ with $|r_i|\le\tfrac{h^2}{24}\max|J'''|$ on smooth cells. Hence
\begin{equation}\label{eq:tauflux}
\tau_i \;=\; -\,\frac{\varepsilon_{i+1/2}-\varepsilon_{i-1/2}}{h} \;-\; r_i ,
\qquad
\varepsilon_{i+1/2} \;:=\; \hat J_{i+1/2}(\bm u)-J(x_{i+1/2}),
\end{equation}
where $\hat J$ is the total numerical flux, advective minus diffusive. On a smooth interface, Taylor expansion of the second-order upwind advective flux gives $|\hat g^{(2)}_{i+1/2}-(\mu u)(x_{i+1/2})|\le\tfrac{3}{8}h^2\max|(\mu u)''|$, and of the centered diffusive flux $\bigl|[(Du)_{i+1}-(Du)_i]/h-(Du)'(x_{i+1/2})\bigr|\le\tfrac{1}{24}h^2\max|(Du)'''|$, so $|\varepsilon_{i+1/2}|\le h^2 M_S$ there. On a layer interface we bound crudely by the fluxes themselves, $|\varepsilon_{i+1/2}|\le|\hat J_{i+1/2}(\bm u)|+|J(x_{i+1/2})|\le C_1\bar\mu S_L$, using the third assumption, and the near-boundary first-order rows contribute $O(h)$ flux errors at $O(1)$ interfaces, absorbed into the same term. Since each interface error enters \eqref{eq:tauflux} in at most two rows,
\begin{equation}\label{eq:taubound}
\norm{\bm\tau}_1 \;\le\; \frac{2}{h}\sum_{\text{interfaces}}|\varepsilon| \;+\; \sum_i|r_i|
\;\le\; \frac{2}{h}\Bigl[\frac{|\Omega|}{h}\,h^2M_S + C_1N_L\bar\mu S_L\Bigr] + \frac{|\Omega|}{h}\,\frac{h^2}{24}M_S .
\end{equation}
Combining \eqref{eq:stab}, \eqref{eq:taubound}, $\gamma\le h/(2\bar\mu)$, and $\norm{\cdot}_{1,h}=h\norm{\cdot}_1$,
\begin{equation}\label{eq:consbound}
\norm{\hat{\bm p}-\bm u}_{1,h}
\;\le\; \frac{C_2}{1-\kappa}\Bigl[\,\frac{|\Omega|}{\bar\mu}\,M_S\,h^2 + N_LS_L\,h\,\Bigr].
\end{equation}

\emph{Step 3: the limiter defect.} Since $\Phi$ is a $q$-contraction with fixed point $\bm p^{*}$, the standard perturbation bound gives $\norm{\hat{\bm p}-\bm p^{*}}_1\le(1-q)^{-1}\norm{\Phi(\hat{\bm p})-\hat{\bm p}}_1$. Using $\hat{\bm p}=\Phi_\infty(\hat{\bm p})$,
\begin{equation*}
\Phi(\hat{\bm p})-\hat{\bm p} \;=\; \gamma\,M^{-1}\bigl(D_\Lambda(\hat{\bm p})-C\hat{\bm p}\bigr),
\end{equation*}
and the argument of $M^{-1}$ is a flux difference of the clamp residuals $\Lambda_{i+1/2}(\hat{\bm p})-d_{i+1/2}(\hat{\bm p})$, which vanish off the active set of $\hat{\bm p}$ and are bounded by $|d_{i+1/2}(\hat{\bm p})|$ on it. By the second and third assumptions the active set lies in $\mathcal A$ and the fluxes there are bounded by $\bar\mu S_L$, so
\begin{equation}\label{eq:defectbound}
\norm{\bm p^{*}-\hat{\bm p}}_{1,h}
\;\le\; \frac{h}{1-\kappa}\cdot\gamma\cdot\frac{2}{h}\cdot N_L\,\bar\mu S_L\cdot h \cdot \frac1h \;\le\; \frac{1}{1-\kappa}\,N_LS_L\,h,
\end{equation}
again by $2\gamma\bar\mu\le h$. Adding \eqref{eq:consbound} and \eqref{eq:defectbound} through \eqref{eq:errsplit} yields \eqref{eq:pecletbound} with $C(\kappa)=C_3/(1-\kappa)$. Every constant is independent of $D$ and of $\mathrm{Pe}_h$: the diffusion coefficient enters the argument only through $M^{-1}$, whose norm is exactly one, and through $M_S$, which the second assumption bounds uniformly. Part (a) follows by inserting $S_L\le C_0h$, and part (b) by inspection of the two terms.

\section{Proof of \cref{prop:dc}} \label{appDC}

We first extend \cref{lem:clamp} to a clamp with a fixed offset.

\begin{lemma}[Clamp with fixed additional fluxes]\label{lem:clampoffset}
Fix an interface and abbreviate $c^{+}=h\,p^{\,n}_i/2$, $c^{-}=h\,p^{\,n}_{i+1}/2$ and $G=G_{i+1/2}$. The map $\bm p\mapsto\widetilde\Lambda_{i+1/2}(\bm p)$ of \eqref{eq:combflux} is the composition of the translation $d\mapsto \Delta t\,d+G$ with the clamp onto $[-c^{-},c^{+}]$. Since the translation is an isometry and the clamp onto a fixed interval is nonexpansive and monotone, for any $\bm p,\bm p'$,
\begin{equation*}
\bigl|\widetilde\Lambda_{i+1/2}(\bm p)-\widetilde\Lambda_{i+1/2}(\bm p')\bigr|
\;\le\;\Delta t\,\bigl|d_{i+1/2}(\bm p)-d_{i+1/2}(\bm p')\bigr| ,
\end{equation*}
and the value $\widetilde\Lambda_{i+1/2}(\bm p)$ always lies in $[-c^{-},c^{+}]$, an interval containing zero because $\bm p^{\,n}\ge0$.
\end{lemma}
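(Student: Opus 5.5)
The plan is to prove the three claims of the lemma in order: the composition structure, the Lipschitz bound, and the range statement.

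First, the structural claim is read off directly from \eqref{eq:combflux}. Write $\widetilde\Lambda_{i+1/2}(\bm p)=\Pi\bigl(T(d_{i+1/2}(\bm p))\bigr)$. Here $T(d):=\Delta t\,d+G$ is an affine map with fixed offset $G=G_{i+1/2}$. The offset is fixed because it is computed once from $\bm w=A_2\bm Y_0$, or is zero in the predictor, and does not depend on the current iterate. The map $\Pi(x):=\max(-c^{-},\min(x,c^{+}))$ is the clamp onto $[-c^-,c^+]$. The caps depend only on the base $\bm p^{\,n}$, not on $\bm p$, so the interval is fixed across sweeps. This is the only point that needs care: if the caps were drawn from the iterate, nonexpansiveness in $\bm p$ would fail.

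Second, the Lipschitz bound. I would first check that $\Pi$ is nonexpansive and monotone on $\mathbb R$, exactly as in \cref{lem:clamp}. $\Pi$ is the metric projection onto a closed interval, so $|\Pi(x)-\Pi(y)|\le|x-y|$. Alternatively, $\Pi$ is piecewise linear with slopes in $\{0,1\}$ and continuous. Nonexpansiveness of a projection does not require the interval to contain zero; that hypothesis is used only for the range claim. The translation part satisfies $T(d)-T(d')=\Delta t(d-d')$, because the offset $G$ cancels in differences. Chaining the two gives
$|\widetilde\Lambda(\bm p)-\widetilde\Lambda(\bm p')|\le|T(d(\bm p))-T(d(\bm p'))|=\Delta t\,|d(\bm p)-d(\bm p')|$.
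Strictly, $T$ is an isometry only up to the scaling $\Delta t$. I would remark that it is the offset, not the scaling, that cancels, and that the stated bound carries the factor $\Delta t$.

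Third, the range. The output of $\Pi$ lies in $[-c^-,c^+]$ by definition. Since $\bm p^{\,n}\ge0$, we have $c^\pm=h\,p^{\,n}_{\cdot}/2\ge0$, so the interval contains zero. This is what later lets node withdrawals be bounded by half the budget regardless of $G$, mirroring part (i) of \cref{prop:schemeBp}.

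I expect no real obstacle. The only subtle step is making explicit that both $G$ and the caps are frozen within a stage, so that the iterate enters only through the linear flux $d_{i+1/2}$.
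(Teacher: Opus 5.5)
Your proposal is correct and follows the paper's own argument: the combined flux is the clamp onto the fixed interval $[-c^{-},c^{+}]$ applied to the affine map $d\mapsto\Delta t\,d+G$, the offset $G$ cancels in differences, nonexpansiveness of the clamp gives the $\Delta t$-Lipschitz bound, and $\bm p^{\,n}\ge0$ makes the interval contain zero. Your remark that this map is an isometry only up to the factor $\Delta t$ is a correct sharpening of the lemma's wording: only the offset cancels, and the stated bound already carries the factor $\Delta t$, so it is unaffected.
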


\begin{proof}
Immediate from the two displayed properties of the clamp, exactly as in \cref{lem:clamp}. The offset $G$ shifts the argument of the clamp but not the interval, so it cancels in the difference and does not enlarge the range.
\end{proof}

\begin{proof}[Proof of \cref{prop:dc}]
Write $M=\mathcal I-\Delta t\,A_1$ and let $\Psi$ denote the sweep map of a stage,
\begin{equation*}
\Psi(\bm p)\;=\;M^{-1}\Bigl(\bm p^{\,n}-\tfrac1h\bigl[\widetilde\Lambda_{i+1/2}(\bm p)-\widetilde\Lambda_{i-1/2}(\bm p)\bigr]_i\Bigr),
\end{equation*}
with $G\equiv0$ for the predictor and $G$ as in \cref{ssec:dclimited} for the corrector. \Cref{lem:core} applies verbatim to $M$ with $\gamma=\Delta t$.

\emph{(i) Positivity.} The right-hand side of $\Psi$ is nonnegative for every input $\bm p$: at node $i$, the interface $i+\tfrac12$ withdraws at most $\widetilde\Lambda_{i+1/2}/h\le c^{+}/h=p^{\,n}_i/2$ when its value is positive, the interface $i-\tfrac12$ withdraws at most $c^{-}_{i-1/2}/h=p^{\,n}_i/2$ (by \eqref{eq:caps} applied to $i-\tfrac12$ with $\bm b = \bm p^{\,n}$) when its value is negative, all other contributions are deposits, and the range statement of \cref{lem:clampoffset} guarantees these bounds regardless of $G$. Hence $r_i\ge p^{\,n}_i-\tfrac{p^{\,n}_i}{2}-\tfrac{p^{\,n}_i}{2}=0$, and $\Psi(\bm p)=M^{-1}\bm r\ge0$. Both stages start from $\bm p^{[0]}=\bm p^{\,n}\ge0$, so every iterate and both fixed points are nonnegative.

\emph{(ii) Conservation and stability.} Each clamped value enters the divergence in exactly two rows with opposite signs and the boundary interfaces carry no flux, so $\bm1^\top$ of the flux divergence vanishes identically, for every limiter value and every $G$. Applying $\bm1^\top$ to $M\bm p^{[k+1]}=\bm r(\bm p^{[k]})$ and using $\bm1^\top M=\bm1^\top$ gives $\bm1^\top\bm p^{[k+1]}=\bm1^\top\bm p^{\,n}$ at every sweep of both stages, hence $\bm1^\top\bm p^{\,n+1}=\bm1^\top\bm p^{\,n}$. On nonnegative vectors the $\ell_1$ norm equals the mass, so $\norm{\bm p^{\,n+1}}_1=\norm{\bm p^{\,n}}_1$ and the map from $\bm p^{\,n}$ to $\bm p^{\,n+1}$ is $\ell_1$-isometric on the cone, which is unconditional stability.

\emph{(iii) Contraction.} By \cref{lem:core}, \cref{lem:clampoffset}, and the same two-row counting as in the proof of \cref{prop:schemeBp}(iii),
\begin{equation*}
\norm{\Psi(\bm p)-\Psi(\bm p')}_1
\;\le\;\frac{2}{h}\sum_{\text{interfaces}}\bigl|\widetilde\Lambda(\bm p)-\widetilde\Lambda(\bm p')\bigr|
\;\le\;\frac{2\Delta t}{h}\sum_{\text{interfaces}}\bigl|d(\bm p)-d(\bm p')\bigr|
\;\le\;\frac{2\Delta t\,\bar\mu}{h}\,\norm{\bm p-\bm p'}_1 ,
\end{equation*}
the offsets $G$ having cancelled in the first inequality. For $q=2\Delta t\bar\mu/h<1$ the Banach theorem gives the unique fixed point of each stage and geometric convergence.

\emph{(iv) Temporal order.} Suppose the caps are slack at the fixed points of both stages on a region $\mathcal R$ of nodes, meaning every combined flux at the interfaces of $\mathcal R$ satisfies $|F|\le\min(c^{+},c^{-})$ there. Then $\widetilde\Lambda=F$ on those interfaces, and the rows of the two fixed-point systems on $\mathcal R$ coincide with the rows of \eqref{eq:predictor} and \eqref{eq:corrector}, since $\Delta t\,d+G$ unclamped reassembles $\Delta t\,C\bm p+\bm s$ and $M\bm p-\Delta t\,C\bm p=(\mathcal I-\Delta t A_2)\bm p$ by the split \eqref{eq:coresplit}. On $\mathcal R$ the computed step is therefore the linear two-stage scheme, whose stability function \eqref{eq:Rfunc} matches $e^z$ through $z^2$, giving a local error $O(\Delta t^3)$ and, combined with the $O(h^2)$ spatial consistency of $A_2$, a second-order step in time and space. If a cap binds at an interface, the clamp replaces the combined flux there by a budget value, which is the local fallback toward the limited backward Euler right-hand side, first-order in time. The binding condition $|F|>h\,p^{\,n}_j/2$ requires the combined flux to exceed half the local density scale, which by the estimates in the proof of \cref{prop:schemeBp}(iv) can occur only where the density is at the level of the truncation error or inside an unresolved layer, the same set on which the spatial limiter is active.
\end{proof}

\section{Proof of \cref{lem:separated}} \label{app:separated}

\begin{proof}
Let $\mathcal I_S$ be the set of free interfaces of $S$. The correction operator $C_S$ acts nontrivially only on $\mathcal I_S$; clamped interfaces contribute zero rows and columns to $C_S$. After a symmetric permutation that orders the unknowns block-wise as
\begin{equation}\label{eq:blockorder}
\text{(free block 1, clamped block 1, free block 2, clamped block 2, \dots, free block $k$, clamped block $k$)},
\end{equation}
the matrix $V_S$ takes the block-diagonal form
\begin{equation}\label{eq:blockdiag}
V_S = \begin{pmatrix}
V_{F_1} & 0      & \cdots & 0 \\
0      & M_{C_1} & \cdots & 0 \\
\vdots & \vdots  & \ddots & \vdots \\
0      & 0       & \cdots & V_{F_k} \\
0      & 0       & \cdots & M_{C_k}
\end{pmatrix},
\end{equation}
where each $V_{F_i}$ is the restriction of $\mathcal I-\gamma(A_1+C_S)$ to the $i$-th free block and its immediate neighbors (which include the bounding clamped interfaces in the stencil), and each $M_{C_i}$ is the restriction of $M$ to the $i$-th clamped block. The separation condition ensures that the correction stencils of distinct free blocks do not overlap: the support of $C_S$ on free block $i$ extends at most to the adjacent clamped interfaces, which belong to the boundary of that block and do not couple to the interior of the next free block. Consequently, the off-diagonal coupling between distinct free blocks vanishes, and the permutation yields the block-diagonal structure \eqref{eq:blockdiag}.

Each diagonal block $V_{F_i}$ is the restriction of $\mathcal I-\gamma(A_1+C_S)$ to a subdomain consisting of a contiguous free segment bounded by clamped interfaces. On this subdomain the correction $C_S$ coincides with the all-free correction restricted to that segment, and the norm estimate of part (a) applies locally because the segment length is finite and the boundary conditions at the clamped interfaces are Dirichlet-type (fixed flux values). By the same Neumann series argument, each $V_{F_i}$ is nonsingular. Each clamped block $M_{C_i}$ is a principal submatrix of the monotone core matrix $M = \mathcal I - \gamma A_1$. Because $M$ is a nonsingular M-matrix (as established in \cref{lem:core}), its principal submatrix $M_{C_i}$ is also a nonsingular M-matrix. Nonsingularity of $V_S$ then follows from the nonsingularity of all its diagonal blocks.

\end{proof}

\end{document}